\documentclass[11pt]{amsart} 

\usepackage{amssymb, amsfonts, amsthm,amsmath}
\usepackage{a4wide}


\newcommand{\Bin}[1]{\operatorname{Bin}\left(#1\right)}

\newcommand{\PPo}[1]{\PP \left[\,#1\,\right]}

\newcommand{\Ex}[1]{\mathbb{E} \left[\, #1\,\right]}

\newcommand{\Var}[1]{\mathrm{Var}\left[#1\right]}
\newcommand{\Cov}[2]{\mathrm{Cov} \left(#1,#2 \right)}

\newcommand{\EE}{\mathbb{E}}
\newcommand{\PP}{\mathbb{P}}
\newcommand{\Ev}[1]{\mathcal{E}_{#1}}

\newcommand{\rt}[1]{\bar{r}_{#1}}
\newcommand{\st}[1]{\bar{s}_{#1}}


\newcommand{\eps}{\varepsilon}

\newcommand{\Gnp}{G(n,p)}
\newcommand{\VertexSet}{V_n}
\newcommand{\EdgeProb}{p}

\newcommand{\mm}[1]{\mu_{++} (#1)}

\newcommand{\sgn}{\mathrm{sgn}}

\newtheorem{theorem}{Theorem}[section]

\newtheorem{lemma}[theorem]{Lemma}

\newtheorem{claim}[theorem]{Claim}

\newtheorem{conjecture}[theorem]{Conjecture}

\usepackage{xcolor}

\begin{document}

\title[Rapid stabilisation in dense random graphs]{Resolution of a conjecture on majority dynamics: rapid stabilisation in dense random graphs}

\author{Nikolaos Fountoulakis\textsuperscript{1}}
\thanks{\textsuperscript{1}Part of this research was carried out while this author was visiting TU Graz, supported by TU Graz. Research partially supported by the Alan Turing Institute 
grant EP/N5510129/1.}
\address{School of Mathematics, University of Birmingham, B15 2TT, Birmingham, United Kingdom. {\tt {n.fountoulakis@bham.ac.uk}}}
\author{Mihyun Kang\textsuperscript{2}}
\address{Institute of Discrete Mathematics, Graz University of Technology, Steyrergasse 30, 8010, Graz, Austria.  {\tt {kang@math.tugraz.at}}}
\thanks{\textsuperscript{2}supported by  Austrian Science Fund (FWF): I3747 and  W1230III}
\author{Tam\'as Makai\textsuperscript{3}}
\address{Department of Mathematics G. Peano, University of Torino, Via Carlo Alberto 10, 10123, Torino, Italy. {\tt {tamas.makai@unito.it}}}
\address{School of Mathematics and Statistics, The University of New South Wales, Sydney, NSW 2052, Australia. {\tt {t.makai@unsw.edu.au}}}
\thanks{\textsuperscript{3}supported by  	
EPSRC grant EP/S016694/1, ``Memory in Evolving Graphs" (Compagnia di San Paolo/Universit\`a degli Studi di Torino) and ARC grant DP190100977}

\begin{abstract}
We study majority dynamics on the binomial random graph $G(n,p)$ 
with $p = d/n$ and $d > \lambda n^{1/2}$, for some large $\lambda>0$. 
In this process, each vertex has a state in $\{-1,+1 \}$ and at each round every vertex adopts the state of the majority of its neighbours, retaining its state in the case of a tie. 

We show that with high probability the process reaches unanimity in  at most four rounds. This confirms a 
conjecture of Benjamini, Chan, O'Donnel, Tamuz and Tan. 
\end{abstract}

\maketitle

\section{Introduction} 
Majority dynamics is a process on a graph $G=(V,E)$ which evolves in discrete steps and  
at step $t\geq 0$, every vertex $v \in V$ has state $S_t(v) \in \{ -1,+1\}$. The state of 
each vertex changes according to the majority of its neighbours in $G$. Namely, 
given the configuration $\{ S_t (v)\}_{v \in V}$ just 
after step $t$, vertex $v\in V$ has state 
$$S_{t+1}(v) :=
\begin{cases}
 \sgn \left(\sum_{u\in N(v)} S_t(u)\right) & \quad \text{if}\quad \sum_{u\in N(v)} S_t(u) \not = 0,\\
 S_t (v) & \quad \text{otherwise},
\end{cases}
$$
where $N(v)$ denotes the set of vertices adjacent to $v$ in $G$, and $\sgn(x)=-1$ if $x<0$ and $+1$ if $x>0$.  
In other words, $v$ adopts the majority of its neighbours, whereas, in the case of a tie, it retains its current state. 

This class of processes can be seen as a generalisation of a cellular automata such as those introduced 
by von Neumann~\cite{bk:vonNeumann}. In particular, it can be seen as a variation of the well-known Conway's \emph{Game of Life}~\cite{ar:Gardner1970}. This is a two-state game on the 2-dimensional integer lattice, but 
with a slightly richer set of rules. 
In a different context, these processes were considered by Granovetter~\cite{ar:Granovetter} as a model of 
the evolution of social influence.  There is certain resemblance with the class of processes that are known 
as~\emph{majority bootstrap processes}, but the crucial difference is that majority dynamics is \emph{non-monotone} 
in the sense that a vertex may change states multiple times. Thus, unlike the classical bootstrap processes, 
the process may never stabilise into a final configuration. 

However, as Goles and Olivos proved in~\cite{ar:GolOliv80}, if $G$ is finite, then eventually (that is, for $t$ 
sufficiently large) the process becomes periodic with period at most 2. More specifically, there is a $t_0$ 
depending on $G$ such that for any $t>t_0$ and for any $v\in V$ we have 
$S_{t}(v) = S_{t+2} (v)$. 

Majority dynamics is also a special case of voting with $q\geq 2$ alternative opinions, 
see~\cite{ar:MosselNeemanTamuz}. Each voter is assumed to be the vertex of a graph, and their initial 
opinions are selected from the set $\{1,\ldots, q\}$ independently of every other voter according to some 
distribution. At each round, a voter adopts the most popular opinion among its neighbours. 

In this paper we consider the evolution of majority dynamics on $\Gnp$, which is 
 the random graph on the set  $\VertexSet = [n]:=\{1,\ldots, n\}$, where 
 every pair of distinct vertices is present as an edge with probability $\EdgeProb$ independently of any other pair. 
We will consider this process on $\Gnp$ with initial configuration   
$\{S_0(v)\}_{v\in \VertexSet}$ being a family of independent random variables uniformly distributed in $\{ -1,+1\}$.
That is, each vertex in $\VertexSet$ initially is in state $+1$ with probability $1/2$, independently of the 
state of every other vertex.

Results regarding this setting were obtained recently by Benjamini et al.~\cite{ar:BenjChanmDonnelTamuz2016}. 
They showed that if $p\geq \lambda n^{-1/2}$ where $n > n_0$, for some sufficiently large constants $\lambda, n_0$, then 
$\Gnp$ is such that with probability at least 0.4 over the choice of the random graph and the choice of the 
initial state, the vertices in $\VertexSet$ unanimously hold the initially most popular state after four rounds. 
Benjamini et al. conjectured that in fact this holds with high probability.
The main result of this paper is the proof of their conjecture. 
\begin{theorem} \label{thm:main}
For all $0< \eps \le 1$ there exist $\lambda, n_0$ such that for all $n > n_0$, if $p\geq \lambda n^{-1/2}$, then 
$\Gnp$ is such that with probability at least 
$1-\eps$, over the choice of the random graph and the choice of the 
initial state, the vertices in $\VertexSet$ unanimously have state $\sgn(\sum_{v \in \VertexSet} S_0(v))$ 
after four rounds.
\end{theorem}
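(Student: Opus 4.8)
The plan is to track the evolution of the global bias $\Delta_t := \sum_{v \in \VertexSet} S_t(v)$ and to show that it grows in magnitude, while retaining its initial sign, fast enough that unanimity is reached within four rounds. First I would record that, since the $S_0(v)$ are i.i.d.\ uniform signs, $\EE[\Delta_0] = 0$ and $\Var{\Delta_0} = n$, so a second-moment argument gives $|\Delta_0| = \Theta(\sqrt n)$ with high probability; by symmetry assume $\Delta_0 > 0$, so the target state is $\sgn(\Delta_0) = +1$. The heuristic driving the whole proof is that, conditioning on the round-$t$ configuration, each neighbourhood sum $X_v^{(t)} := \sum_{u \sim v} S_t(u)$ has conditional mean $\approx p\Delta_t$ and standard deviation $\approx \sqrt{d}$, whence $S_{t+1}(v) = \sgn(X_v^{(t)})$ equals $+1$ with probability $\approx \Phi(\sqrt{d}\,\delta_t)$, where $\delta_t := \Delta_t/n$ and $\Phi$ is the standard normal distribution function. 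Tracking magnitudes with $d = np \ge \lambda\sqrt n$: one has $\sqrt d\,\delta_0 = \Theta(\sqrt p) = o(1)$, which the first round amplifies to $\sqrt d\,\delta_1 = \Theta(\lambda)$; the second round drives $\delta_2$ to a constant bounded away from $0$ (close to $1$ for large $\lambda$); the third round reduces the minority to $n\,\Phi(-\Theta(\sqrt d)) = n\exp(-\Theta(\sqrt n)) = o(1)$ in expectation; and a fourth round mops up any survivors.

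The first round is the only one that is genuinely clean. I would condition on the initial states $\{S_0(v)\}$, so that the edge indicators $\mathbf{1}[u\sim v]$ are independent and each $X_v^{(0)} = \sum_{u \ne v} \mathbf{1}[u\sim v] S_0(u)$ is a sum of independent terms with mean $p(\Delta_0 - S_0(v))$ and variance $\approx d$. A Berry--Esseen estimate gives $\PP[S_1(v)=+1 \mid S_0] \approx \Phi(\sqrt d\,\delta_0) \approx \tfrac{1}{2} + \sqrt{\tfrac{1}{2\pi}}\,\sqrt d\,\delta_0$, so that $\EE[\Delta_1 \mid S_0] \approx \sqrt{2d/\pi}\,\Delta_0$. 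Crucially, for $u \ne w$ the variables $X_u^{(0)}$ and $X_w^{(0)}$ share (given $S_0$) only the single edge indicator $\mathbf{1}[u\sim w]$, so $\Cov{X_u^{(0)}}{X_w^{(0)}} = O(p)$ and hence $\Cov{S_1(u)}{S_1(w)} = O(p/d) = O(1/n)$; summing these covariances bounds $\Var{\Delta_1 \mid S_0}$ and yields $\Delta_1 = (1+o(1))\sqrt{2d/\pi}\,\Delta_0$ with high probability, in particular $\sqrt d\,\delta_1 = \Theta(\lambda)$ with the correct sign.

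The difficulty, and the main obstacle, is that for $t \ge 1$ the states $S_t(u)$ are measurable functions of the entire graph, so the independence exploited above is lost: the summands of $X_v^{(1)}$ are correlated with the very edges at $v$ that select them. To restore independence I would use a \emph{decoupling} argument: for each neighbour $u$ of $v$ define $S_1^{(v)}(u) := \sgn(X_u^{(0)} - \mathbf{1}[u\sim v]S_0(v))$, the round-one state of $u$ computed with the edge $uv$ deleted, which is independent of the edges incident to $v$. Then $S_1(u)$ and $S_1^{(v)}(u)$ differ only for \emph{near-tie} vertices with $|X_u^{(0)}| \le 1$, of which $v$ has only $O(\sqrt d)$ in expectation; since the signal $p\Delta_1 = \Theta(\lambda\sqrt d)$ dominates this $O(\sqrt d)$ error for $\lambda$ large, replacing $S_1$ by $S_1^{(v)}$ only shifts the effective bias inside $\Phi$ by $O(1)$. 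Conditioning on $\{S_1^{(v)}(u)\}$, the sum $\sum_{u} \mathbf{1}[u\sim v] S_1^{(v)}(u)$ is again a sum of independent terms, so $\PP[S_2(v)=+1] \approx \Phi(\sqrt d\,\delta_1) = \Phi(\Theta(\lambda))$, a constant close to $1$. Iterating the same decoupling through rounds three and four, and bounding the variance of $\Delta_2$ to obtain concentration of $\delta_2 = \Theta(1)$, the round-three minority count has expectation $\sum_v \PP[S_3(v)=-1] \le n\exp(-\Theta(d)) = o(1)$, so by Markov's inequality no minority vertex survives three rounds with high probability; even granting fewer guarantees, once at most $o(n)$ vertices are $-1$ every vertex has at most $o(d)$ negative neighbours, so the fourth round forces $S_4(v) = +1 = \sgn(\Delta_0)$ for all $v$ simultaneously, and a union bound over the high-probability events of each stage completes the proof. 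I expect the delicate points to be the uniform control of the decoupling error against the still-modest $\Theta(\lambda)$ signal in round two, and the passage from ``almost all agree'' to a statement holding for \emph{every} vertex in the final round.
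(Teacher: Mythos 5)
Your opening two rounds are broadly sound and in fact mirror the paper's machinery: the covariance bound $\Cov{S_1(u)}{S_1(w)}=O(1/n)$ obtained by conditioning on the shared edge $uw$ is exactly the paper's covariance claim (though your ``hence'' from $\Cov{X_u^{(0)}}{X_w^{(0)}}=O(p)$ to the covariance of the \emph{signs} is a leap --- one needs the anti-concentration/local-limit step showing that flipping a single edge changes $\PP[S_1(u)=+1]$ by at most $O(1/\sqrt{d(1-p)})$), and your per-vertex decoupling for round two is a workable substitute for the paper's cleaner device of conditioning on $N(v)=\Gamma$ and on $S_0$ before exposing any other edge (Lemma~\ref{lem:2-steps-lemma}). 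The genuine gap is the endgame, rounds three and four, which is precisely where the paper's architecture deliberately departs from the heuristic you follow. Your fallback claim that ``once at most $o(n)$ vertices are $-1$, every vertex has at most $o(d)$ negative neighbours'' is false as stated: the minority set after round two is a function of the graph, so it can a priori conspire with the graph --- for instance contain most of the neighbourhood of some vertex --- and no per-vertex expectation computation excludes this. Likewise, the per-vertex bound $\PP[S_3(v)=-1]\le e^{-\Theta(d)}$ cannot be obtained by ``iterating the same decoupling'': $S_2(u)$ depends on all edges within distance two of $u$, so deleting the edge $uv$ no longer decouples $S_2(u)$ from $v$'s edges. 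One would have to control the cascade by which $v$'s edges flip $O(\sqrt d)$ near-tie neighbours at round one, which in turn flip further vertices at round two; that requires anti-concentration for neighbourhood sums of \emph{dependent} round-one states, a substantial argument you do not supply. The same dependence problem blocks your asserted concentration of $\Delta_2$, since round-two covariances are no longer governed by a single shared edge.

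The paper sidesteps both obstacles structurally. It proves only a constant-probability statement for the first two rounds ($\PP[S_2(v)=+1]\ge 1-\eps/20$ per vertex, then Markov's inequality to get a minority of at most $n/10$ with probability at least $1-\eps/2$), and then proves a statement \emph{uniform over all configurations} with minority at most $n/10$ (Lemma~\ref{lem:lasttwo}): with probability $1-o(1)$ over the graph alone, every such configuration reaches unanimity in two further rounds. The uniformity is exactly what severs the correlation between the minority set and the graph; it is paid for by a union bound over all $2^n$ partitions and all vertex sets $W$ of size $d/10$, with per-set failure probability $e^{-\Omega(d^2)}=e^{-\Omega(\lambda^2 n)}$ --- which is where the hypothesis $d\ge\lambda\sqrt n$ is consumed in that lemma. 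To repair your proof you would either need such a uniform endgame lemma (at which point you have essentially reproduced the paper's structure), or a genuine multi-round influence/decoupling argument with its attendant anti-concentration estimates, which is far more work than the single clause in your sketch suggests.
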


In our proof we exploit the fact that typically the initial number of vertices in the two states differ by at least $\Omega(\sqrt{n})$ vertices. Tran and Vu \cite{TranVu2019} showed that when $p$ is a constant, already a significantly smaller majority will lead to unanimity in 4 steps, with probability close to one. In particular, 
they showed that this happens already when one of the states exceeds the other by a large enough constant.

One might think that unanimity is reached for other classes of {\em sparser} random graphs or expanding graphs. 
However, Benjamini et al.~\cite{ar:BenjChanmDonnelTamuz2016} proved that for the class of 4-regular random 
graphs or 4-regular expander graphs, 
with high probability unanimity is not reached at any time, if the probability of state $+1$ in the beginning of 
the process is between $1/3$ and $2/3$. 
However, this is not the case for $d$-regular $\lambda$-expanders where $\lambda$ is the bound on the 
second-largest in absolute value eigenvalue, provided that $\lambda/d \leq 3/16$. Mossel et al.
(Theorem 2.3 in~\cite{ar:MosselNeemanTamuz}) showed that unanimity is reached eventually, under this assumption provided that the initial distribution 
of state $+1$ is sufficiently biased.
This bias is of order $1/\sqrt{d}$, that is, the assumption is that
 $\PPo{S_0(v)=+1}=\frac{1}{2} + \frac{c}{\sqrt{d}}$, for some constant $c>0$.

More recently, Zehmakan~\cite{ar:Zehm2018} proved a more general result on the evolution of 
majority dynamics on $d$-regular expander graphs. In particular, he proved that on a $d$-regular 
$\lambda$-expander graph $G$, when the initial configuration satisfies
$\sum_{v \in V(G)} S_0(v) \geq \frac{4\lambda}{d} n$, majority dynamics will reach 
the configuration where every vertex has state $+1$ within $O(\log_{d^2/\lambda^2} n)$ rounds.
Also, G\"artner and Zehmakan~\cite{ar:GartZeh2018} showed that if the initial density of the $-1$s is 
$1/2 - \eps$ for some $\eps>0$, then the majority dynamics will eventually reach the configuration where every vertex has state $+1$. 

Returning to the study of the process on $G(n,p)$ with $p = d/n$, Zehmakan~\cite{ar:Zehm2018} also 
showed that if $\PPo{S_0(v) = +1} 
=\frac12 + \omega \left(\frac{1}{\sqrt{d}}\right)$ and $d > (1+\eps )\log n$, then with high probability 
the process reaches unanimity in a constant number of rounds.

Another model of similar flavour is the model analysed by Abdullah and Draief~\cite{ar:AbdDraief} where 
instead of reading its entire neighbourhood, every vertex samples $k$ random vertices from its neighbourhood 
and adopts the state of the majority of the vertices in the random sample. Abdullah and Draief considered 
this model on  $G(n,p)$ with $p = d/n$ where $d \geq (2+\eps) \log n$.    
They showed that if the initial density of one of the two states is bounded away from $1/2$, then the above 
process will eventually arrive at unanimity with high probability. Moreover, the final is the one that has the initial majority.  

Besides the study of majority dynamics on random graphs, Benjamini et al.~\cite{ar:BenjChanmDonnelTamuz2016}
considered the question whether the Goles-Olivos theorem in~\cite{ar:GolOliv80}, which guarantees eventual periodicity for finite graphs, 
also holds for infinite graphs which satisfy certain assumptions. They showed that this is the case for the class of 
unimodular transitive graphs. These are vertex-transitive graphs (and therefore regular) in which flows that are 
invariant under the automorphism group are such that for every vertex the in-flow equals the out-flow. 
They also showed that stabilisation to periodicity occurs in at most $2d$ rounds, where $d$ is the degree of 
the graph. They conjectured that this is the case for every bounded degree infinite graph. 

Majority dynamics on other classes of graphs were recently considered by G\"artner and Zehmakan~\cite{ar:GartZeh2017}. They analysed majority dynamics on an $n\times n$ grid as well as 
on a torus. The initial state is determined by a random binomial subset of the vertex set, where every vertex 
is initially set to $-1$ with probability $p$ independently of every other vertex. 
 
The rest of the paper is devoted to the proof of Theorem~\ref{thm:main}. In Section~\ref{sec:outline} we present a  heuristic by Benjamini et al.~\cite{ar:BenjChanmDonnelTamuz2016} and  outline the proof of Theorem~\ref{thm:main}. 
We study the states after the first two rounds and the last two rounds in Sections~\ref{sec:begin} and~\ref{sec:end}, respectively. The proof of Theorem~\ref{thm:main} is presented in Section~\ref{sec:proof-mainthm}. 
We conclude the paper with discussions on a conjecture of Benjamini et al.~\cite{ar:BenjChanmDonnelTamuz2016} about smaller values of $d$.
 
\section{Heuristic and proof outline}\label{sec:outline}
It will be more convenient to use the average degree as the parameter instead of the edge probability. Set $d=np$ and the condition on $p$ in Theorem~\ref{thm:main} translates to $d\ge \lambda n^{1/2}$.

Let $\mu_t := n^{-1}\sum_{v \in \VertexSet} S_t (v)$ denote the average of the states of the vertices in $\VertexSet$ 
by step $t$. 
Benjamini et al.~\cite{ar:BenjChanmDonnelTamuz2016} conjectured that if $d \to \infty$, the 
quantities $(\mu_t^2)_{t\geq 0}$ increase with high probability  by a factor of $d$. More specifically, their heuristic is that 
$\mu_{t+1}^2 \gtrsim d \cdot \mu_t^2$, as long as $d \cdot \mu_t^2 \leq 1$. 

The heuristic is based on the assumption that redistributing the states of the vertices in every step does not alter the outcome significantly. More precisely, at the beginning of each step the state of every vertex follows an independent $\{-1,+1\}$-valued random variable with expectation $\mu_t$.
Then for a vertex $v$ which has $d$ neighbours the sum of the states of the neighbours behaves like $N(d\mu_t,d(1-\mu_t^2))$. Ignoring the possibility that there is no clear majority within the neighbourhood the probability that $v$ will have state $\sgn(\mu_t)$ in the following step is roughly $\PPo{N(d\mu_t,d)>0}\approx \frac{1}{2}+\sqrt{\frac{2}{\pi}}\sqrt{d}\mu_t$, when $\sqrt{d}|\mu_t| \ll 1$. 
As the $S_0(v)$s are independent and identically distributed on $\{-1,+1\}$, 
we have $\mathbb{E} [\mu_0^2] = \frac{1}{n}$.  
Hence, it is expected that the sequence $\mu_1^2, \mu_2^2,\ldots $ scales like $\frac{d}{n}, \frac{d^2}{n},\ldots$ 
until $d^t \approx n$. Thereafter, almost unanimity is reached in one more step, whereas one final step 
is required to arrive to complete unanimity.

The proof of Theorem~\ref{thm:main} is inspired by this heuristic. 
More precisely, the proof consists of two major parts, each consisting of two steps. In the first part (Lemma~\ref{lem:2-steps-lemma}) we show that with probability close to 1 almost every vertex adopts the state of the initial majority. Afterwards in the second part (Lemma~\ref{lem:lasttwo}) we prove that after two more steps, again with probability close to 1, every vertex will have the same state. 

For the first part (Section~\ref{sec:begin}), we will condition on the initial state satisfying 
$|\sum_{v \in V_n} S_0 (v)|\geq 2c \sqrt{n}$, for some $c >0$ such that 
the probability of this event is at least $1 - \eps/4$, for $n$ large enough. 
Then by using the second moment method on $X_2 (v)=\sum_{u \in N(v)} {\bf 1}(S_1 (u) =+1)$ we show that in two rounds an arbitrary 
vertex $v$ will have adopted the initial majority with probability $1-\eps/20$, when $n$ is large enough. 
For the second moment method we need to calculate the expectation (Lemma~\ref{lem:expectation}) and the variance (Lemma~\ref{lem:variance}) of the random variable  $X_2 (v)$. Finally, Markov's inequality implies that with probability at least 
$1-\eps/2$ most of the vertices have the same state as the initial majority. 

In the second part of the proof (Section~\ref{sec:end}) we will show that with probability $1-o(1)$ (as $n\to \infty$) 
(over the choices of the underlying graph) 
if we start with a configuration where 
all but at most $n/10$ of the vertices have state $+1$, then in two more steps all vertices will be of state $+1$ (Lemma~\ref{lem:lasttwo}). This will rely on an 
application of the union bound together with sharp concentration inequalities. 
Hence, the next 2 rounds will lead to unanimity.

\section{The first two rounds}\label{sec:begin}
In this section we will show that an arbitrary vertex will have state $+1$ after two rounds with probability close to 1,  
if we condition on an initial state with a sufficient majority of $+1$s. Due to symmetry this also holds for the state of a vertex to be $-1$ after two rounds, when the initial state has a sufficient majority of $-1$s.

In order to achieve this we will first expose the initial state of every vertex and only start exposing the edges afterwards. 
The following lemma ensures that after exposing the initial state of the vertices one of the two states will have a sufficient majority; the proof can be found in Section~\ref{sec:initial}.
\begin{lemma}[Initial state]\label{lem:initialstate}
Given $\eps>0$, set $c=c(\eps)=\sqrt{2\pi}\eps/20$. Then 
$$\PPo{ \big|\sum\nolimits_{u \in V_n} S_0 (u)\big |\geq 2c \sqrt{n} } \ge 1 - \eps/4,$$
when $n$ is large enough.
\end{lemma}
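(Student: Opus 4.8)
The plan is to recognise $W := \sum_{u \in \VertexSet} S_0 (u)$ as a sum of $n$ independent Rademacher random variables and to control the resulting small-deviation probability through the central limit theorem. Since each $S_0(u)$ is uniform on $\{-1,+1\}$, we have $\Ex{S_0(u)} = 0$ and $\Var{S_0(u)} = 1$, so $W$ has mean $0$ and variance $n$. Proving the stated lower bound on $\PPo{\big|\sum_u S_0(u)\big| \ge 2c\sqrt{n}}$ is equivalent, by taking complements, to proving the upper bound $\PPo{|W| < 2c\sqrt{n}} \le \eps/4$, and I would concentrate on the latter.

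First I would normalise and invoke the central limit theorem for the sum $W/\sqrt{n}$, which converges in distribution to a standard normal $Z \sim N(0,1)$. Because the limiting law is continuous, the point $2c$ is a continuity point of its distribution function, so
$$\PPo{|W| < 2c\sqrt{n}} = \PPo{\left| W/\sqrt{n} \right| < 2c} \longrightarrow \PP\left[\, |Z| < 2c \,\right]$$
as $n \to \infty$. Next I would estimate the limiting Gaussian probability using only the crude fact that the standard normal density never exceeds $(2\pi)^{-1/2}$. Since the interval $(-2c, 2c)$ has length $4c$, this yields $\PP\left[\, |Z| < 2c \,\right] \le 4c/\sqrt{2\pi}$. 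Substituting the prescribed value $c = \sqrt{2\pi}\,\eps/20$ gives $4c/\sqrt{2\pi} = \eps/5$, which is strictly below the target $\eps/4$.

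Combining the two estimates finishes the argument: because the limit $\eps/5$ is strictly smaller than $\eps/4$, the convergence furnished by the central limit theorem guarantees that $\PPo{|W| < 2c\sqrt{n}} \le \eps/4$ for all sufficiently large $n$, the slack $\eps/20$ comfortably absorbing the approximation error. I do not expect any genuine obstacle here, as this is a routine small-ball estimate; the only point meriting a little care is that the chosen constant $c$ be small enough that the crude density bound already beats $\eps/4$, which it does with room to spare. If one preferred an explicit threshold $n_0(\eps)$ in place of the bare existential ``$n$ large enough'', I would replace the plain central limit theorem by a Berry--Esseen bound, whose third-moment correction term is of order $\Ol{1/\sqrt{n}}$ and is readily made smaller than $\eps/20$.
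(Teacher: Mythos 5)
Your proof is correct, but it follows a genuinely different route from the paper's. The paper deduces the lemma from its Local Limit Theorem (Theorem~\ref{cor:lll}): writing the number of $+1$s as a $\Bin{n,1/2}$ variable with variance $n/4$, it bounds each point probability by $\frac{2}{\sqrt{2\pi n}}+\frac{4\gamma}{n}$ and sums over the at most $2c\sqrt{n}+1$ lattice points compatible with $\big|\sum_{u} S_0(u)\big| < 2c\sqrt{n}$, arriving at the same numerical bound $\eps/5+o(1)\le \eps/4$. You instead apply the classical central limit theorem to $W/\sqrt{n}$ and bound the limiting Gaussian probability of $(-2c,2c)$ by $4c/\sqrt{2\pi}=\eps/5$ via the sup bound on the standard normal density; this replaces the paper's lattice-point counting with distributional convergence, and both hinge on exactly the same choice of $c$. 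Your version is more elementary and self-contained for this particular lemma, at the mild cost of being non-quantitative --- the plain CLT yields no explicit threshold $n_0(\eps)$, though, as you note, upgrading to Berry--Esseen repairs that. The paper's detour through the local limit theorem is natural in context: Theorem~\ref{cor:lll} is developed anyway and is indispensable elsewhere (e.g.\ in Claim~\ref{clm:boost} and in the proof of Lemma~\ref{lem:variance}), so the initial-state lemma comes essentially for free from machinery already in place.
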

Throughout this section we will condition on the above event. Due to symmetry, we only need to consider the case when the initial state has a majority of $+1$s. Therefore we actually condition on the event $\{\sum_{u \in V_n} S_0 (u)\geq 2c \sqrt{n}\}$.
\begin{lemma}[First two rounds] \label{lem:2-steps-lemma_I} 
Given $\eps>0$, let $c=c(\eps)=\sqrt{2\pi}\eps/20$. For any $v\in V_n$, we have 
$$\PPo{S_2 (v) = +1 \mid \sum\nolimits_{u \in V_n} S_0(u) \geq 2 c\sqrt{n}} \ge 1- \eps/20,$$
when $n$ is large enough.
\end{lemma}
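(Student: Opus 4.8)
The plan is to apply the second moment method to the number of neighbours of $v$ that are in state $+1$ after the first round. Throughout, fix an arbitrary initial configuration $S_0$ with $M:=\sum_{u\in V_n}S_0(u)\ge 2c\sqrt n$ (such configurations are typical by Lemma~\ref{lem:initialstate}); since the bound we aim for will hold uniformly over all such $S_0$, the conditional statement of the lemma follows by averaging over the conditioned distribution of $S_0$. Once $S_0$ is fixed, the only remaining randomness lies in the edges of $\Gnp$. Writing $X_2(v)=\sum_{u\in N(v)}\Ind{S_1(u)=+1}$ and $\dg{v}=|N(v)|$, we have $S_2(v)=+1$ whenever $X_2(v)>\dg{v}/2$; equivalently, setting $Z(v):=\sum_{u\in N(v)}S_1(u)=2X_2(v)-\dg{v}$, it suffices to show $\PPo{Z(v)\le 0}\le \eps/20$ (the tie $Z(v)=0$ is counted as a failure, so tie-breaking needs no separate treatment).

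First I would estimate the expectation. For a fixed vertex $u$, conditioned on the edge $uv$ being present, the signed sum $\sum_{w\in N(u)}S_0(w)$ is a difference of two independent binomials, with mean $pM+\Ol{p}$ and variance $\Thta{d}$. A local central limit (Berry--Esseen) estimate then yields $\PPo{S_1(u)=+1\mid uv\in E}=\tfrac12+\Thta{pM/\sqrt d}$, and summing over the $n-1$ candidate neighbours gives $\Ex{Z(v)}=\Thta{\sqrt d\cdot pM}=\Omga{d^{3/2}/\sqrt n}$, where the final bound uses $M\ge 2c\sqrt n$. This is the content of Lemma~\ref{lem:expectation}, and it shows that the mean of $Z(v)$ sits a growing distance above $0$.

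The main obstacle is the variance, which is Lemma~\ref{lem:variance}, because the states $S_1(u)$, $u\in N(v)$, are dependent. The key observation is that, once $S_0$ is fixed, $S_1(u)$ is a function of the edges incident to $u$ alone; for distinct $u,u'$ these two edge-sets overlap only in the single edge $uu'$, so conditioning on whether $uu'$ is present renders $S_1(u)$ and $S_1(u')$ independent. Toggling that one edge changes the near-Gaussian sum defining $S_1(u)$ by a single unit, and hence—by the same local-limit analysis used above—shifts $\PPo{S_1(u)=+1}$ by only $\Ol{d^{-1/2}}$. Consequently each off-diagonal covariance is $\Ol{p^{3}/d}$, and summing over the $\Ol{n^{2}}$ pairs contributes $\Ol{d^{2}/n}$; together with the diagonal contribution $\Ol{d}$ and the bound $d\le n$, this gives $\Var{X_2(v)}=\Ol{d}$, whence $\Var{Z(v)}=\Ol{d}$ as well, since $Z(v)=2X_2(v)-\dg{v}$ and $\Var{\dg{v}}=\Ol{d}$ (the cross term being handled by Cauchy--Schwarz).

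Finally I would combine the two estimates through Chebyshev's inequality:
$$\PPo{Z(v)\le 0}\le \frac{\Var{Z(v)}}{\Ex{Z(v)}^{2}}=\frac{\Ol{d}}{\Omga{d^{3}/n}}=\Ol{n/d^{2}}\le \Ol{1/\lambda^{2}},$$
using $d\ge\lambda\sqrt n$ in the last step. Choosing $\lambda$ large enough in terms of $\eps$ makes this at most $\eps/20$, which proves the lemma. I expect the covariance estimate to be the delicate part, and in particular the clean quantitative form of the statement that toggling one edge shifts the sign-probability by $\Ol{d^{-1/2}}$; this is precisely what forces the local-limit analysis of the difference of binomials that already underlies the expectation computation.
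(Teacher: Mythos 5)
Your second-moment engine (Chebyshev on $Z(v)$, with $\Ex{Z(v)}=\Omga{d^{3/2}/\sqrt n}$ and $\Var{Z(v)}=\Ol{d}$) is the same one the paper uses, but the paper deploys it only for initial configurations whose sum is \emph{exactly} $\varphi(2c\sqrt n)$: Lemmas~\ref{lem:expectation} and~\ref{lem:variance}, which you identify your steps with, are stated only on the conditional space $\mathcal S_0\cap\mathcal N_\Gamma(v)$ with $\st{0}$ compatible with $\Ev{c}$. Your plan instead needs these estimates \emph{uniformly} over all configurations with $M=\sum_u S_0(u)\ge 2c\sqrt n$, and that is where there is a genuine gap. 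The claim $\PPo{S_1(u)=+1\mid uv\in E}=\tfrac12+\Thta{pM/\sqrt d}$ is false once $pM\gtrsim\sqrt d$, i.e.\ $M\gtrsim n/\sqrt d$ (a probability cannot exceed $1$), and the conditioning event contains such configurations. This is not a removable corner case: for constant $p$ one has $pM/\sqrt d=\Theta(M/\sqrt n)$, so the linear expansion already breaks down at $M$ a large constant times $\sqrt n$, i.e.\ for configurations carrying non-negligible conditional probability. Similarly, your variance step bounds the one-edge-toggle shift by $\Ol{d^{-1/2}}$ via a local limit theorem applied to the binomial counting minority-state neighbours; when $M$ is close to $n$ that binomial has very few trials and its maximal point probability is near $1$, so the bound as stated fails (it can be repaired by running the argument on the majority side instead, but that is not in your write-up). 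Both estimates can be patched by splitting into a moderate-bias regime (local limit theorem) and a saturated regime (Chernoff, where in fact $\Ex{Z(v)}=\Omga{d}$), but that case analysis is absent.

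The paper sidesteps all of this with one idea missing from your proposal: monotonicity in the initial configuration. Since no edges are exposed before the states, configurations with the same sum are exchangeable, and on any fixed graph flipping initial states from $-1$ to $+1$ can, by monotonicity of $\sgn$, only flip states from $-1$ to $+1$ after one round, and hence after two. Therefore $\PPo{S_2(v)=+1\mid \sum_u S_0(u)=k}$ is non-decreasing in $k$, so the conditional probability given $\{\sum_u S_0(u)\ge 2c\sqrt n\}$ is bounded below by its value at the single extreme value $k=\varphi(2c\sqrt n)$ --- which is exactly Lemma~\ref{lem:2-steps-lemma}, the only place the second-moment machinery is then required. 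To complete your argument you must either supply this coupling step, or carry out the two-regime analysis above; as written, your proof establishes the lemma only for initial sums in the window where $pM/\sqrt d$ stays bounded.
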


Roughly speaking, increasing the majority of $+1$s in the initial state should increase the probability that a vertex has state $+1$ in any step of the process. This in turn should imply that if we can establish a lower bound on the probability of $S_2(v)=+1$ conditional on the smallest possible value of $\sum\nolimits_{u \in V_n} S_0(u)$ satisfying $\sum\nolimits_{u \in V_n} S_0(u)\geq 2 c\sqrt{n}$,  then this will also hold when conditioning on the whole range. We establish the lower bound in Lemma~\ref{lem:2-steps-lemma} and apply it to prove Lemma~\ref{lem:2-steps-lemma_I}. 
Since $\sum\nolimits_{u \in V_n} S_0(u)$ takes only integer values the smallest possible value of $\sum\nolimits_{u \in V_n} S_0(u)$  satisfying $\sum\nolimits_{u \in V_n} S_0(u)\geq 2 c\sqrt{n}$ would potentially be  $\Bigl\lceil 2 c\sqrt{n}\Bigr\rceil$, but we also need to take into account that $\sum_{u\in \VertexSet}S_0(u)$  only takes integer values between $-n$ and $n$ which have the same parity as $n$. 
To reflect this, we let $\varphi(x)$ denote the equivalent of the ceiling function, taking the mod 2 equivalence into account; formally we define $\varphi(x):=\min_{k\in \mathbb{Z}}\{k\ge x \ \text{and} \ k\equiv n \mod 2\}$.  Thus, in Lemma~\ref{lem:2-steps-lemma} we should condition on the event that $\sum\nolimits_{u \in V_n} S_0(u) =  \varphi(2 c\sqrt{n})$. But to ease notation,  we will omit the function $\varphi$ for the remainder of the paper.

Let $\Ev{c}$ denote the event that 
$$\sum\nolimits_{u \in V_n} S_0 (u)= 2c \sqrt{n}.$$
\begin{lemma} \label{lem:2-steps-lemma}
Given $\eps>0$, set $c=c(\eps)=\sqrt{2\pi}\eps/20$. 
For any $v\in V_n$, we have 
$$\PPo{S_2 (v) = +1 \mid \Ev{c}} \ge 1- \eps/20,$$
when $n$ is large enough.
\end{lemma}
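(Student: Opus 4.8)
The plan is to reduce the event $\{S_2(v)=+1\}$ to a lower-tail estimate for $X_2(v)=\sum_{u\in\Nn{v}}\Ind{S_1(u)=+1}$ and to control that tail by Chebyshev's inequality. Since $\sum_{u\in\Nn{v}}S_1(u)=2X_2(v)-\dg{v}$, positivity of this sum, and hence $S_2(v)=+1$, is implied by $X_2(v)>\dg{v}/2$; thus $\{X_2(v)>\dg{v}/2\}\subseteq\{S_2(v)=+1\}$ and it suffices to bound $\PPo{X_2(v)\le\dg{v}/2\mid\Ev{c}}$ from above. I would expose the randomness in stages: first reveal all initial states (so that, under $\Ev{c}$, the configuration is uniform among $\pm1$-assignments with exactly $n/2+c\sqrt n$ plus-spins) together with the edges at $v$, fixing the neighbourhood $\Nn{v}$ and the degree $\dg{v}$; the only remaining randomness in $X_2(v)$ is then the set of edges not incident to $v$. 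A Chernoff bound gives $\dg{v}=(1+o(1))d$ with probability $1-o(1)$, so I may carry out the second-moment argument conditionally on a typical $\Nn{v}$ of size $m=(1+o(1))d$ and absorb the atypical degrees into a negligible additive error.

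Conditionally on $\Ev{c}$ and $\Nn{v}$, Lemma~\ref{lem:expectation} provides the mean. Heuristically, for $u\in\Nn{v}$ the sum $\sum_{w\in\Nn{u}}S_0(w)$ is approximately $N\!\bigl(2cd/\sqrt n,\,d\bigr)$ (its mean being $p(n_+-n_-)=2cd/\sqrt n=2c\sqrt{dp}$ and its variance $\approx d$), so $\PPo{S_1(u)=+1}\approx\tfrac12+\sqrt{2/\pi}\,c\sqrt p$; conditioning on $u\sim v$ shifts this sum only by the single bounded term $S_0(v)$, which is negligible against the standard deviation $\sqrt d$. Summing over the $m$ neighbours yields
$$
\Ex{X_2(v)}=\frac{m}{2}+\Thta{c\,d\sqrt p},
$$
so that the excess $\Ex{X_2(v)}-\dg{v}/2=\Thta{c\,d\sqrt p}=\Thta{c\,d^{3/2}/\sqrt n}$ is positive and, under $d\ge\lambda n^{1/2}$, of order at least $c\lambda^{3/2}n^{1/4}$.

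For the variance, Lemma~\ref{lem:variance} gives $\Var{X_2(v)}=\Ol{d}$. The diagonal terms contribute $\sum_{u\in\Nn{v}}\Var{\Ind{S_1(u)=+1}}=\Ol{d}$. For the off-diagonal terms, two distinct neighbours $u,u'$ of $v$ interact only through the single potential edge $uu'$, and conditionally on its presence the states $S_1(u),S_1(u')$ are independent, so a direct computation gives $\Cov{\Ind{S_1(u)=+1}}{\Ind{S_1(u')=+1}}=p(1-p)\,\delta_u\delta_{u'}$, where $\delta_u$ is the change in $\PPo{S_1(u)=+1}$ produced by inserting the edge $uu'$. The key point is that $\delta_u=\Ol{1/\sqrt d}$: shifting the near-Gaussian sum $\sum_{w\in\Nn{u}}S_0(w)$ by one unit changes the probability of positivity by at most its density near the origin, which is $\Ol{1/\sqrt d}$. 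Hence each covariance is $\Ol{p/d}$, and the $\Ol{d^2}$ of them sum to $\Ol{dp}=\Ol{d^2/n}=\Ol{d}$. Chebyshev's inequality now yields
$$
\PPo{X_2(v)\le\dg{v}/2\mid\Ev{c},\Nn{v}}\le\frac{\Var{X_2(v)}}{\bigl(\Ex{X_2(v)}-\dg{v}/2\bigr)^2}=\Ol{\frac{d}{c^2d^2p}}=\Ol{\frac{n}{c^2d^2}}.
$$
Since $c=\sqrt{2\pi}\,\eps/20$ and $d\ge\lambda n^{1/2}$, the right-hand side is $\Ol{1/(\eps^2\lambda^2)}$, which is at most $\eps/20$ once $\lambda=\lambda(\eps)$ is large enough; removing the conditioning on $\Nn{v}$ and using $\{X_2(v)>\dg{v}/2\}\subseteq\{S_2(v)=+1\}$ gives $\PPo{S_2(v)=+1\mid\Ev{c}}\ge1-\eps/20$.

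I expect the main obstacle to lie in the variance estimate, specifically in justifying the sensitivity bound $\delta_u=\Ol{1/\sqrt d}$: one needs a quantitative anti-concentration (local-limit-type) statement for $\sum_{w\in\Nn{u}}S_0(w)$ that survives the conditioning on $\Ev{c}$ and on $\Nn{v}$, guaranteeing that a single extra edge perturbs the first-round state probability by only $\Ol{1/\sqrt d}$ so that the $\Ol{d^2}$ pairwise correlations cannot overwhelm the diagonal $\Ol{d}$ term. A secondary difficulty is to make the Gaussian approximation underlying Lemma~\ref{lem:expectation} quantitative enough---with explicit error control and with the $\Ol{1}$ effect of conditioning on $u\sim v$ made rigorous---so that the excess is pinned down to order $\Thta{c\,d\sqrt p}$ with the correct constant $\sqrt{2/\pi}$ entering the final choice of $\lambda$.
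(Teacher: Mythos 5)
Your proposal is correct and follows essentially the same route as the paper's own proof: condition on the initial configuration and on a typical neighbourhood $\Gamma$ of $v$ (degree concentration via Chernoff), reduce $\{S_2(v)=+1\}$ to $\{X_2(v)>|\Gamma|/2\}$, and apply Chebyshev using the mean excess $\Theta\left(c\,d^{3/2}/\sqrt{n}\right)$ from Lemma~\ref{lem:expectation} and the $O(d)$ variance from Lemma~\ref{lem:variance}, with $d\ge\lambda\sqrt{n}$ making the failure probability $O\left(n/(c^2d^2)\right)$ small. You also correctly pinpoint the two technical obstacles, which the paper resolves via the local limit theorem (Theorem~\ref{cor:lll}), including the edge case $p$ close to $1$ where your heuristic sensitivity bound $\delta_u=O(1/\sqrt{d})$ would fail and the paper instead bounds the covariance directly by $p(1-p)$.
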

We defer the proof to Section~\ref{sec:2-steps}. Now we can prove Lemma~\ref{lem:2-steps-lemma_I}.
\begin{proof}[Proof of Lemma~\ref{lem:2-steps-lemma_I}]
The result follows from Lemma~\ref{lem:2-steps-lemma} once we show 
\begin{equation}\label{eq:probincrease}
\PPo{S_2 (v) = +1 \mid \sum\nolimits_{u \in V_n} S_0(u)=k}\le \PPo{S_2 (v) = +1 \mid \sum\nolimits_{u \in V_n} S_0(u)=k+2}
\end{equation}
for every integer $-n\le k \le n-2$ with $k\equiv n \mod 2$. For some fixed $-n\le \ell \le n$ with $\ell\equiv n \mod 2$, let $\rt{\ell}$ be an arbitrary initial configuration compatible with $\sum\nolimits_{u \in V_n} S_0(u)=\ell$. As we have not exposed any edges until this point, due to symmetry, we have 
$$\PPo{S_2 (v) = +1 \mid \sum\nolimits_{u \in V_n} S_0(u)=\ell}=\PPo{S_2 (v) = +1 \mid S_0(u)=\rt{\ell}}.$$
Now select $\rt{k}$ and $\rt{k+2}$ in such a way that $\rt{k+2}$ can be created from $\rt{k}$ by changing the initial state of one vertex from $-1$ to $+1$. Note that in any fixed graph on $\VertexSet$, changing the state of vertices from $-1$ to $+1$ in a given step, by the monotonicity of the $\sgn$ function, can only result in changing the state of vertices from $-1$ to $+1$ in the following step. Using this argument repeatedly for the first two steps implies \eqref{eq:probincrease} and the result follows.
\end{proof}

The remainder of this section is structured as follows. We prove Lemma~\ref{lem:initialstate} in Section~\ref{sec:initial}. Then we prove Lemma~\ref{lem:2-steps-lemma}, subject to two technical lemmas (Lemmas~\ref{lem:expectation} and \ref{lem:variance}) in Section~\ref{sec:2-steps}. Finally we prove Lemmas~\ref{lem:expectation} and \ref{lem:variance} in Section~\ref{sec:rounds2} and \ref{sec:var2}, respectively.

Before proceeding with the proof of Lemma~\ref{lem:initialstate}, we introduce auxiliary notations, which will be used in Sections~\ref{sec:2-steps}--\ref{sec:var2}. 
Starting with the proof of Lemma~\ref{lem:2-steps-lemma} in Section~\ref{sec:2-steps}, we consider an initial configuration $\st{0}$ 
compatible with $\Ev{c}$, and condition on the event $\mathcal S_0:=\{ S_0 = \st{0}\}$, where $S_0$ consists of $S_0(u)$ for every $u\in \VertexSet$. 
Now explore the neighbourhood of $v$, which we denote by $N(v)$. 
We also condition on the event $\mathcal{N}_\Gamma(v):=\{N(v)=\Gamma\}$ 
for some fixed set $\Gamma\subseteq V\setminus \{v\}$.
With abuse of notation we write $\mathcal S_0 \cap \mathcal N_\Gamma(v)$ for the intersection of these events. 
Throughout Sections~\ref{sec:2-steps}--\ref{sec:var2}, we will work on the conditional space $\mathcal S_0 \cap \mathcal N_\Gamma(v)$.

\subsection{The initial state: proof of Lemma~\ref{lem:initialstate}}\label{sec:initial}

Lemma~\ref{lem:initialstate} is a consequence of the following local limit theorem for a binomial random variable of the form  $\mathrm{Bin} (k,q(k))$  as we shall see below.%
\begin{theorem}[Local Limit Theorem]\label{cor:lll}
	There exists an absolute constant $\gamma$ such that for every positive integer $k$ and every function $0<q(k)<1$ the random variable $X \sim \mathrm{Bin} (k,q(k))$ satisfies 
	$$\sup_{i \in \mathbb{N}\cup \{0\}}\left|\sqrt{\Var{X}} \cdot \PPo{X=i}-\frac{1}{\sqrt{2\pi}}\exp\left(-\frac{(i-\EE[X])^2}{2\Var{X}}\right)\right| <\frac{\gamma}{\sqrt{\Var{X}}}.$$
\end{theorem}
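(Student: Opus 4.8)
My plan is to prove the sharp local limit theorem directly from Stirling's formula, after first reducing to the genuinely nontrivial regime. Write $\mu=\EE[X]=kq$ and $\sigma^2=\Var{X}=kq(1-q)$, and note the elementary inequalities $\sigma^2\le\mu$ and $\sigma^2\le k-\mu$ (since $\sigma^2/\mu=1-q\le1$ and $\sigma^2/(k-\mu)=q\le1$). The two quantities being compared are each $O(1)$: indeed $\sigma\,\PPo{X=i}\le\sigma$ and the Gaussian term is at most $1/\sqrt{2\pi}$. Hence for $\sigma$ below any fixed threshold the difference is bounded while $\gamma/\sigma$ is large, so the inequality holds for $\gamma$ large enough (for instance, when $\sigma\le1$ the difference is at most $1/\sqrt{2\pi}+1<\gamma/\sigma$). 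Thus I may assume $\sigma\ge\sigma_0$ for a large absolute constant $\sigma_0$. The symmetry $(X,q,i)\mapsto(k-X,1-q,k-i)$ fixes both $\sigma$ and $(i-\mu)^2$, so I may also assume $q\le1/2$. Finally set $t=(i-\mu)/\sigma$, so the target term is $\tfrac{1}{\sqrt{2\pi}}\Expon{-t^2/2}$.

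I would then split into the \emph{tail} range $|t|>\sigma^{1/3}$ (equivalently $|i-\mu|>\sigma^{4/3}$) and the \emph{bulk} $|t|\le\sigma^{1/3}$. In the tail both competing quantities are already $o(1/\sigma)$: the Gaussian term is at most $\tfrac{1}{\sqrt{2\pi}}\Expon{-\sigma^{2/3}/2}$, and a standard Chernoff bound gives $\PPo{X=i}\le\PPo{|X-\mu|\ge|i-\mu|}\le2\Expon{-\Omega(\sigma^{2/3})}$, whence $\sigma\,\PPo{X=i}\le2\sigma\,\Expon{-\Omega(\sigma^{2/3})}$; both are below $\gamma/(2\sigma)$ once $\sigma_0$ is large. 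All values $i\notin\{0,\dots,k\}$ fall automatically into the tail, since there $|i-\mu|\ge\min(\mu,k-\mu)\ge\sigma^2$.

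For $i$ in the bulk I would apply Stirling's formula $m!=\sqrt{2\pi m}\,(m/e)^m e^{\theta_m/(12m)}$ with $0<\theta_m<1$ to each factorial, which yields, with $a=i/k$,
\[
\PPo{X=i}=\frac{1}{\sqrt{2\pi}}\sqrt{\frac{k}{i(k-i)}}\,\Expon{-k\,H(a\,\|\,q)}\,e^{R},
\]
where $H(a\,\|\,q)=a\ln\frac{a}{q}+(1-a)\ln\frac{1-a}{1-q}$ is the relative entropy and $|R|\le\tfrac{1}{12i}+\tfrac{1}{12(k-i)}+\tfrac{1}{12k}$. In the bulk $i=\Theta(\mu)\ge\Theta(\sigma^2)$ and $k-i=\Theta(k)\ge\Theta(\sigma^2)$, so $R=O(1/\sigma^2)$. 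A Taylor expansion of $H$ about $a=q$ (using $H(q\,\|\,q)=H'(q)=0$, $H''(q)=1/(q(1-q))$, and a Lagrange remainder governed by $|H'''(\xi)|=O(1/q^2)$ for $\xi$ near the smaller parameter $q$) gives $k\,H(a\,\|\,q)=t^2/2+O(t^3/\sigma)$, while expanding the prefactor gives $\sqrt{k/(i(k-i))}=\sigma^{-1}\bigl(1+O(t/\sigma)\bigr)$; the inequalities $\mu,k-\mu\ge\sigma^2$ are exactly what make these remainders uniform in $q$. Writing $B=O(t^3/\sigma)$ and combining,
\[
\sigma\,\PPo{X=i}=\tfrac{1}{\sqrt{2\pi}}\,\Expon{-t^2/2}\bigl(1+O(t/\sigma)\bigr)e^{B}\bigl(1+O(1/\sigma^2)\bigr).
\]
Subtracting $\tfrac{1}{\sqrt{2\pi}}\Expon{-t^2/2}$, splitting the bracket $(1+A)e^{B}(1+C)-1$ into $(A+C+AC)e^{B}$ and $(e^{B}-1)$, and using that $|t|\le\sigma^{1/3}$ forces $|B|\le t^2/4$ for large $\sigma$ (so $\Expon{-t^2/2}e^{B}\le\Expon{-t^2/4}$) together with the global boundedness of $t\,\Expon{-t^2/4}$ and $t^3\,\Expon{-t^2/4}$, I obtain a difference of order $O(1/\sigma)$ throughout the bulk, completing the proof.

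The delicate point is this bulk estimate. A naive bound on the cubic entropy correction at the edge of the bulk produces only $O(\sigma^{-2/3})$, which is far too weak; the resolution is that every error term appears multiplied by the Gaussian weight $\Expon{-t^2/2}$, and the Gaussian decay absorbs the polynomial growth in $t$ to restore the required $O(1/\sigma)$. Making this quantitative, and simultaneously verifying that the Stirling remainder, the entropy Taylor remainder, and the prefactor expansion are all \emph{uniform} in $q$ (which hinges on $\mu,k-\mu\ge\sigma^2$ in the large-variance regime), is the main obstacle. I expect one could alternatively run the argument through the characteristic function $(1-q+qe^{it})^k$ and Fourier inversion, but the Stirling route is more self-contained for the binomial and keeps the uniformity in $q$ transparent.
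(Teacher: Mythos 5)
Your proof is correct, but it takes a genuinely different route from the paper's. The paper deduces the statement from a local limit theorem for sums of independent, \emph{not necessarily identically distributed} Bernoulli variables (Theorem~\ref{cor:lll_general}): writing $\sigma=\sqrt{\Var{X}}$, it represents $\sigma\cdot\PPo{X=i}$ by the Fourier inversion formula for lattice variables (Theorem~\ref{thm:inv}), represents the Gaussian term as a Fourier integral as well, and splits the \emph{frequency} integral into three ranges --- low frequencies $|t|\le 1/(4L)$ controlled by a Berry--Esseen estimate on the characteristic function (Lemma~\ref{lem:BET}), intermediate frequencies $1/(4L)\le |t|\le \pi\sigma$ controlled by the pointwise identity $|f_j(t)|^2=1+2(\cos t-1)\Var{X_j}$, and the Gaussian tail. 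Your argument instead exploits the i.i.d.\ structure: Stirling's formula turns the point probability into $\frac{1}{\sigma\sqrt{2\pi}}\Expon{-kH(a\,\|\,q)}$ times controlled correction factors, and the case split is in \emph{space} (bulk $|i-\mu|\le\sigma^{4/3}$ versus a Chernoff-controlled tail) rather than in frequency. The quantitative points on which your argument hinges all check out: the reduction to $\sigma\ge\sigma_0$ and, by the symmetry $(X,q,i)\mapsto(k-X,1-q,k-i)$, to $q\le 1/2$; the inequalities $\mu\ge\sigma^2$ and $k-\mu\ge\sigma^2$, which keep the Stirling remainder at $O(1/\sigma^2)$, the entropy remainder at $O(|t|^3/\sigma)$ and the prefactor error at $O(|t|/\sigma)$ uniformly in $q$; and the absorption of the polynomial factors $|t|$, $|t|^3$ by $\Expon{-t^2/4}$, which is exactly what rescues the naive $O(\sigma^{-2/3})$ bound at the edge of the bulk. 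As for what each approach buys: yours is elementary and self-contained (Stirling plus Chernoff, no characteristic-function machinery), keeps the uniformity in $q$ transparent, and suffices for every application in the paper, all of which concern genuine binomials; the paper's Fourier route requires the inversion formula and a Berry--Esseen input, but it delivers the stronger non-identically-distributed statement and needs no spatial case analysis, since the contour split in frequency handles all values of $i$ simultaneously.
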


Note that Theorem~\ref{cor:lll} is about the distribution of the sum of $k$ independent Bernoulli-distributed 
random variables whose parameters may depend on $k$. 
It is a generalisation of a classical result on a local limit theorem 
for partial sums of infinite sequences of independent random variables variables (e.g. Theorem 4 in Chapter VII from~\cite{MR0388499}). 
Its proof can be found in Section~\ref{sec:proof_of_LLT}.

\begin{proof}[Proof of Lemma~\ref{lem:initialstate}]
For $\eps>0$ recall that we set $c=c(\eps)=\sqrt{2\pi}\eps/20$. By Theorem~\ref{cor:lll} with $X=\Bin{n,1/2}$, and as $\Var{\Bin{n,1/2}}=n/4$, we have
$$\PPo{\Big|\sum\nolimits_{v \in V_n} S_0 (v)\Big|< 2 c\sqrt{n}}\le \left(2c\sqrt{n}+1\right)\left(\frac{2}{\sqrt{2\pi n}}+\frac{4 \gamma}{n}\right)\le \frac{\eps}{5} + o(1) \le \frac{\eps}{4},$$
where the last inequality holds for sufficiently large $n$. 
\end{proof}

\subsection{The first two steps: proof of Lemma~\ref{lem:2-steps-lemma}}\label{sec:2-steps}

Recall that for the remainder of the section we work on the conditional space $\mathcal S_0 \cap \mathcal N_{\Gamma}(v)$. 
In particular, we consider the family $\{ S_1 (u) \}_{u \in N(v)}$, conditional on $\{ S_0 = \st{0}\}$, where $\st{0}$ is compatible with $\Ev{c}$, and 
a certain realisation of $N(v)$. 
To derive Lemma~\ref{lem:2-steps-lemma}, we will show that, uniformly over the choice of $\st{0}$ and $\Gamma\subseteq V\setminus \{v\}$ with $\big||\Gamma|-d\big|\le d^{2/3}$, we have 
$$\PPo{S_2 (v) = +1 \mid  \mathcal S_{0}\cap \mathcal N_{\Gamma}(v)} \ge 1- \eps/20.$$
In particular,  we will apply a second moment argument on the random variable 
$$X_2 (v) = \sum\nolimits_{u \in N(v)} {\bf 1}(S_1 (u) =+1),$$ 
conditioned on $\mathcal S_{0}\cap  \mathcal N_{\Gamma}(v)$.
(Note that in this conditional space if $X_2 (v ) > |\Gamma|/2$, then $S_2(v) = +1$.) 
To this end, we obtain bounds on the expectation and the variance of $X_2 (v)$. 

\begin{lemma}\label{lem:expectation}
	There exists a constant $\xi$ (independent of $\eps$) such that for large enough $n$ and any $\Gamma\subseteq V\setminus \{v\}$ satisfying $\big||\Gamma|-d\big|\le d^{2/3}$ we have
	$$\Ex{X_2 (v) \mid \ \mathcal S_{0}\cap \mathcal N_{\Gamma}(v) }\ge \frac{|\Gamma|}{2}+\frac{\xi c}{7} \cdot \left( \frac{d^3}{n} \right)^{1/2}.$$
\end{lemma}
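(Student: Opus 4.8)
## Proof Proposal for Lemma~\ref{lem:expectation}

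The plan is to compute $\Ex{X_2(v) \mid \mathcal S_0 \cap \mathcal N_\Gamma(v)}$ by exploiting linearity of expectation and then estimating, for a fixed neighbour $u \in \Gamma$, the conditional probability that $S_1(u) = +1$. Since $X_2(v) = \sum_{u \in \Gamma} \Ind{S_1(u) = +1}$, we have
\begin{equation}\label{eq:exp-linearity}
\Ex{X_2(v) \mid \mathcal S_0 \cap \mathcal N_\Gamma(v)} = \sum\nolimits_{u \in \Gamma} \PPo{S_1(u) = +1 \mid \mathcal S_0 \cap \mathcal N_\Gamma(v)}.
\end{equation}
The key point is that $S_1(u) = +1$ is determined by the sign of $\sum_{w \in N(u)} S_0(w)$, and since we have \emph{not} yet exposed the edges incident to $u$ (we have only conditioned on $S_0 = \st{0}$ and on $N(v) = \Gamma$), the neighbourhood $N(u)$ of each $u \in \Gamma$ is still random. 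The degree of $u$ is distributed like $\Bin{n-1, p}$ (possibly shifted by one depending on whether $u \sim v$, which is already fixed), and conditional on its degree the neighbours are a uniformly random subset.

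Next I would set up the sum $\sum_{w \in N(u)} S_0(w)$ as a sum over a random neighbourhood. Writing $a = \sum_{w \in V_n} S_0(w) = 2c\sqrt{n}$ for the fixed initial imbalance under $\Ev{c}$, the conditional expectation of $\sum_{w \in N(u)} S_0(w)$ is roughly $p \cdot a = 2cp\sqrt{n} = 2cd/\sqrt{n}$, a positive drift of order $d/\sqrt{n}$. The heuristic from Section~\ref{sec:outline} then suggests $\PPo{S_1(u) = +1} \approx \tfrac12 + \sqrt{2/\pi}\,\cdot (\text{drift})/\sqrt{\text{variance}}$, and since the variance is of order $d$, the excess over $\tfrac12$ is of order $(d/\sqrt{n})/\sqrt{d} = \sqrt{d}/\sqrt{n} \cdot \sqrt{d}/\sqrt{d}$; multiplying through by $|\Gamma| \approx d$ terms in \eqref{eq:exp-linearity} produces the claimed main term of order $c\,(d^3/n)^{1/2}$. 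To make this rigorous I would apply the Local Limit Theorem (Theorem~\ref{cor:lll}) to the random variable counting $+1$-neighbours of $u$, which is a $\Bin{d_u^+, \cdot}$-type sum (or handle it by first conditioning on $\deg(u)$ and writing $\sum_{w \in N(u)} S_0(w)$ in terms of a hypergeometric-like count of $+1$'s among the neighbours). Summing the pointwise Gaussian tail estimate against $\tfrac12$ yields the lower bound $\PPo{S_1(u) = +1} \ge \tfrac12 + \Omega(c\sqrt{d}/\sqrt{n})$ uniformly over $u \in \Gamma$, and \eqref{eq:exp-linearity} gives the result with an appropriate absolute constant $\xi$ (the factor $1/7$ built in as slack).

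The main obstacle I anticipate is controlling the \emph{anticoncentration and tie probability} carefully enough, and handling the dependence introduced by the conditioning. Specifically, one must verify that the probability of a tie ($\sum_{w \in N(u)} S_0(w) = 0$) does not erode the gain, and that conditioning on $\mathcal N_\Gamma(v)$ (which fixes whether each $u \in \Gamma$ is adjacent to $v$, and fixes $S_0$ on all of $V_n$) does not distort the drift computation — the quantity $a$ is large and positive, so removing $O(1)$ vertices from the pool changes it negligibly. The delicate step is quantifying the lower bound $\sqrt{2/\pi}$ on the Gaussian density factor via Theorem~\ref{cor:lll}: the error term $\gamma/\sqrt{\Var{X}} = O(1/\sqrt{d})$ must be shown to be dominated by the main drift contribution across the $O(d/\sqrt{n})$ window of relevant values of $\sum_{w \in N(u)} S_0(w)$, which is exactly where the condition $d \ge \lambda\sqrt{n}$ (equivalently $d^3/n \to \infty$) enters to guarantee the main term beats the error. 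I would isolate this estimate as the technical heart of the argument and carry the constant $\xi$ through explicitly so that the slack factor leaves room for the variance bound in Lemma~\ref{lem:variance} to close the second-moment argument.
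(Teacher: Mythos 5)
Your proposal follows the same overall strategy as the paper: linearity of expectation over $u\in\Gamma$, a per-neighbour estimate $\PPo{S_1(u)=+1\mid \mathcal S_0\cap\mathcal N_\Gamma(v)}\ge \tfrac12+\Omega\bigl(c\sqrt{d/n}\bigr)$ driven by a local limit theorem comparison of the drift $2cd/\sqrt n$ against fluctuations of order $\sqrt d$, and finally multiplication by $|\Gamma|\ge d-d^{2/3}>6d/7$. You also correctly locate where $d\ge\lambda\sqrt n$ enters: it guarantees $cd/\sqrt{n}\ge c\lambda\ge\beta$ (this is exactly \eqref{eq:expgain}), and the paper remarks that this is the only place the density assumption is used.

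Where you diverge is in how the per-vertex estimate is carried out, and here your sketch underestimates the bookkeeping. The paper splits $V_n\setminus\{u,v\}$ into $V_+\cup V_{++}$ (the $+1$s) and $V_-$ with $|V_+|=|V_-|$ exactly, so that $n_+(u)$ and $n_-(u)$ are i.i.d., and then conditions on the excess count $n_{++}(u)$. The baseline $\tfrac12$ comes for free from this symmetry (Claim~\ref{clm:first_6_terms} and \eqref{eq:identical_binomials}), and Theorem~\ref{cor:lll} is used only to lower-bound the point probabilities $\PPo{n_+(u)+i=n_-(u)}\ge\xi/\sqrt d$ (Claim~\ref{clm:boost}), which converts the typical excess $n_{++}(u)\approx\mm{u}$ into the gain. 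In your direct formulation the two neighbour counts are $\Bin{|V_+|+|V_{++}|,p}$ and $\Bin{|V_-|,p}$, which are \emph{not} identically distributed, so there is no exact $\tfrac12$ to start from; if you read both the baseline and the gain off a Gaussian approximation, the approximation error is incurred over the whole $\Theta(\sqrt d)$ bulk (or via a Berry--Esseen bound of size $O(1/\sqrt d)$), not merely over the drift window of width $O(d/\sqrt n)$ as your last paragraph asserts. That claim is only valid after a decomposition of the type the paper makes; otherwise you must verify separately that the $O(1/\sqrt d)$ error is beaten by the gain $\Omega(c\sqrt{d/n})$, which again works because $cd/\sqrt n\ge\beta$, but is precisely the delicate step the paper's symmetry trick avoids.

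Two further concrete gaps. First, your statement that ``the variance is of order $d$'' fails when $p$ is close to $1$: the relevant variance is $\Theta\bigl(np(1-p)\bigr)$, which can be $o(1)$, and then any LLT/CLT-based estimate collapses (the error $\gamma/\sqrt{\Var{\cdot}}$ blows up). The paper's proof of Claim~\ref{clm:boost} therefore splits into the cases $d>n/(1+c^2/162)$, handled by a Chernoff bound because the drift swamps the tiny fluctuations, and $d\le n/(1+c^2/162)$, handled by the LLT; your argument needs the same case distinction. Second, your fallback of conditioning on $\deg(u)$ and treating the $+1$-neighbour count as hypergeometric is not supported by Theorem~\ref{cor:lll}, which applies to sums of independent Bernoulli variables; you should keep the unconditioned binomial formulation, as the paper does. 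Finally, note that ties and the unknown states of $u$ and $v$ are handled in the paper by proving the bound in the worst case, demanding $n_+(u)+n_{++}(u)\ge n_-(u)+2$ in \eqref{eq:0th_lowerbound}; your $O(1)$-shift remark is adequate for this, but only because the drift exceeds a large constant, which is once more the condition \eqref{eq:expgain}.
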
 
\begin{lemma}\label{lem:variance}
	Let $\gamma$ be as in Theorem~\ref{cor:lll}. Then for any $\Gamma\subseteq V\setminus \{v\}$ satisfying $\big||\Gamma|-d\big|\le d^{2/3}$ we have
	$$\Var{X_2 (v) \ \mid \mathcal S_{0}\cap \mathcal N_{\Gamma}(v)} \le (\max\{96\gamma^2,8\}+1)d.$$ 	
\end{lemma}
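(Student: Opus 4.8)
The plan is to expand the conditional variance of $X_2(v)=\sum_{u\in\Gamma}\Ind{S_1(u)=+1}$ over pairs and to treat the diagonal and off-diagonal contributions separately. All probabilities below are on the space $\mathcal S_0\cap\mathcal N_\Gamma(v)$, and I write $f_u:=\Ind{S_1(u)=+1}$, so that
$$\Var{X_2(v)} = \sum_{u\in\Gamma}\Var{f_u} + 2\!\!\sum_{\{u,u'\}\subseteq\Gamma}\!\!\Cov{f_u}{f_{u'}}.$$
Each $f_u$ is a $\{0,1\}$-indicator, hence $\Var{f_u}\le 1/4$, and since $\big||\Gamma|-d\big|\le d^{2/3}$ we have $|\Gamma|\le 2d$ for large $n$; thus the diagonal sum is at most $d/2$. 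The whole task is therefore to bound the sum of covariances by $O(d)$.

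The key structural point is that, once the initial states are frozen, the only remaining randomness lies in the edges not incident to $v$, and for two neighbours $u,u'$ of $v$ this randomness is \emph{shared only through the single edge} $\{u,u'\}$. Indeed, for $u\in\Gamma$ we have $v\in N(u)$ and $S_1(u)$ is determined by the signed neighbour-sum $\Sigma_u$ together with the frozen tie-breaking value $S_0(u)$. Fixing distinct $u,u'\in\Gamma$, set $B:=\Ind{\{u,u'\}\in E}\sim\Be{p}$ and $A_u:=\sum_{w\notin\{v,u,u'\}}S_0(w)\,\Ind{\{u,w\}\in E}$, so that $\Sigma_u=S_0(v)+S_0(u')B+A_u$, and define $A_{u'},\Sigma_{u'}$ symmetrically. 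As $A_u$ and $A_{u'}$ are functions of disjoint edge sets, they are independent of each other and of $B$; hence, conditionally on $B$, the indicators $f_u$ and $f_{u'}$ are independent. Writing $g(B):=\PPo{S_1(u)=+1\mid B}$ and $h(B):=\PPo{S_1(u')=+1\mid B}$, the elementary covariance identity for functions of a single Bernoulli variable gives
$$\Cov{f_u}{f_{u'}}=\Cov{g(B)}{h(B)}=p(1-p)\,\bigl(g(1)-g(0)\bigr)\bigl(h(1)-h(0)\bigr).$$
Because passing from $B=0$ to $B=1$ shifts $\Sigma_u$ by exactly $S_0(u')\in\{-1,+1\}$, a short case check on the signs of $S_0(u)$ and $S_0(u')$ shows that $g(1)-g(0)=\pm\,\PPo{A_u=z_0}$ for a single integer $z_0$; in particular $|g(1)-g(0)|\le\max_z\PPo{A_u=z}$, and likewise for $h$.

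It then remains to bound the maximal point probability of $A_u$. Writing $A_u=P_u-M_u$ with $P_u\sim\Bin{n_+,p}$ and $M_u\sim\Bin{n_-,p}$ independent (here $n_\pm$ count the $\pm1$-vertices outside $\{v,u,u'\}$), a one-line convolution estimate gives $\max_z\PPo{A_u=z}\le\max_j\PPo{P_u=j}$. Since $\st0$ is compatible with $\Ev c$, the state $+1$ is in the majority, so $n_+\ge n/2$ for large $n$ and $\Var{P_u}=n_+p(1-p)$. Applying Theorem~\ref{cor:lll} to $P_u$ yields
$$\max_j\PPo{P_u=j}\le\frac{1}{\sqrt{2\pi\,\Var{P_u}}}+\frac{\gamma}{\Var{P_u}}\le 2\max\left\{\frac{1}{\sqrt{2\pi\,\Var{P_u}}},\ \frac{\gamma}{\Var{P_u}}\right\}.$$
Squaring and substituting into the covariance identity, the factor $p(1-p)$ cancels against $\Var{P_u}=n_+p(1-p)$: the Gaussian branch gives $\Cov{f_u}{f_{u'}}\le \tfrac{2}{\pi n_+}$ while the error branch gives $\Cov{f_u}{f_{u'}}\le \tfrac{4\gamma^2}{n_+^2\,p(1-p)}$. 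Summing over the at most $|\Gamma|^2\le 4d^2$ ordered pairs and using $n_+\ge n/2$, $p=d/n$ and $d\le n$, the Gaussian branch contributes at most $8d$ and the error branch at most $96\gamma^2 d$; since both per-pair bounds are constant across pairs, the total is at most the worse of the two branches, which produces the factor $\max\{96\gamma^2,8\}$, and the leftover $d$ absorbs the diagonal, giving $(\max\{96\gamma^2,8\}+1)d$.

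The main obstacle is the covariance analysis of the second step: one must recognise that the dependence between $S_1(u)$ and $S_1(u')$ is carried entirely by the edge $\{u,u'\}$, so that conditioning on that single edge decouples them and reduces the covariance to the product of the two discrete derivatives $g(1)-g(0)$ and $h(1)-h(0)$. Turning each derivative into one point probability $\PPo{A_u=z_0}$ and then controlling it at scale $d^{-1/2}$ via the local limit theorem is where all the work, and all the constants, reside; the diagonal estimate and the final summation are routine.
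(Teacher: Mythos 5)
Your strategy coincides with the paper's own proof at every structural point: the same diagonal/off-diagonal split, the same observation that conditioning on the single edge $\{u,u'\}$ makes the two indicators independent (so that $\Cov{f_u}{f_{u'}}=p(1-p)\bigl(g(1)-g(0)\bigr)\bigl(h(1)-h(0)\bigr)$, which is the paper's Claim~\ref{clm:cov}), the same reduction of each discrete derivative to a single binomial point probability (the paper's Claim~\ref{clm:close-sums}), and the same use of Theorem~\ref{cor:lll} to bound that point probability. The immaterial differences (you bound the point probability by the binomial on the majority side $n_+$ rather than the minority side, and you derive the covariance identity via the law of total covariance rather than by direct expansion) are fine. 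The genuine gap is in your ``error branch''. Your per-pair bound there is $4\gamma^2/\bigl(n_+^2\,p(1-p)\bigr)$: the factor $p(1-p)$ from the covariance identity cancels only one of the two powers of $p(1-p)$ produced by squaring $\gamma/\Var{P_u}$, leaving a residual $1/\bigl(p(1-p)\bigr)$. Summing over at most $4d^2$ pairs with $n_+\ge n/2$ and $p=d/n$ gives $64\gamma^2 d/\bigl(n(1-p)\bigr)$, not $96\gamma^2 d$; the claimed bound holds only when $1-p\ge 2/(3n)$, which is not guaranteed, since the lemma must cover all $d\le n$ and hence $p$ arbitrarily close to $1$. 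Concretely, for $p=1-n^{-2}$ one has $\Var{P_u}=n_+p(1-p)=O(n^{-1})$, the local limit theorem is vacuous (its error term swamps the Gaussian term), your per-pair error bound is $\Theta(\gamma^2)$, and the sum over $\Theta(n^2)$ pairs is $\Theta(\gamma^2 n^2)\gg 96\gamma^2 d$. Only the Gaussian branch enjoys the exact cancellation of $p(1-p)$ that makes it valid for all $p$.

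The repair is short and is exactly the dichotomy the paper performs: if $p\le 1-24\gamma^2n^{-1}$, the variance $\Var{P_u}$ is large enough that the LLT error term is dominated and your computation goes through; if $p>1-24\gamma^2n^{-1}$, abandon the LLT and use the trivial estimate $|g(1)-g(0)|,\,|h(1)-h(0)|\le 1$ in your covariance identity, so that
\begin{equation*}
\Cov{f_u}{f_{u'}}\le p(1-p)\le \frac{24\gamma^2}{n},
\end{equation*}
whence the off-diagonal sum is at most $4d^2\cdot 24\gamma^2/n\le 96\gamma^2 d$. With this case distinction added, your proof is complete and is essentially identical to the one in the paper.
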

We defer the proof of these two lemmas to Sections~\ref{sec:rounds2} and~\ref{sec:var2}, respectively. We now prove Lemma~\ref{lem:2-steps-lemma} using them. In the following proof as well as later, we will use 
\begin{equation}\label{eq:lambdadef}
\lambda=\lambda(\eps)=\max\{c^{-2},\beta^2\},
\end{equation}
where $\beta$ is a large constant independent of $\eps$.  

\begin{proof}[Proof of Lemma~\ref{lem:2-steps-lemma}]
	Let $\gamma'=\max\{96\gamma^2,8\}+1$. 
	By Lemma~\ref{lem:expectation} we have for any $\Gamma\subseteq V\setminus \{v\}$ satisfying $\big||\Gamma|-d\big|\le d^{2/3}$
	$$ \Ex {X_2 (v) \mid \mathcal S_0 \cap \mathcal{N}_{\Gamma} (v)  } - \frac{|\Gamma|}{2}> 
	\frac{\xi c}{7} \cdot \left( \frac{d^3}{n} \right)^{1/2},$$
	Chebyshev's inequality implies that 
	\begin{equation*}
	\begin{split}
	\PPo{ X_2 (v) \le  \frac{|\Gamma|}{2} \mid \mathcal S_0 \cap \mathcal{N}_{\Gamma} (v)} &\le \frac{49\gamma'}{\xi^2}\cdot \frac{d}{c^2d^3/n} = \frac{49\gamma'}{\xi^2}\cdot \frac{n}{c^2d^2}\\
	&\le \frac{49\gamma'}{\xi^2}\cdot \frac{1}{c^2 \lambda^2} \stackrel{\eqref{eq:lambdadef}}{\le} \frac{49\gamma'}{\xi^2}\cdot \frac{1}{c^2 (c^{-3}\beta)}\stackrel{c=\sqrt{2\pi}\eps/20}{\le} \frac{\eps}{40},
	\end{split}
	\end{equation*}
	when $\beta$ is large enough. In particular this implies that
	$$\PPo{ S_2(v)=+1 \mid \mathcal S_0 \cap \mathcal{N}_{\Gamma} (v)}\ge 1-\frac{\eps}{40},$$
	and as this holds for any $\Gamma\subseteq V\setminus \{v\}$ satisfying $\big||\Gamma|-d\big|\le d^{2/3}$ we also have
	$$\PPo{ S_2(v)=+1 \mid \mathcal S_0 \cap \mathcal{N} (v)}\ge 1-\frac{\eps}{40},$$
	where 	
	$$\mathcal{N} (v):=\bigcup_{\substack{\Gamma \subseteq V\setminus \{v\}\\ \big||\Gamma|-d\big|\le d^{2/3}}}\mathcal{N}_{\Gamma}(v)=\{\big|\ |N(v)| - d\ \big| \le d^{2/3}\}.$$ Note that (by a standard Chernoff bound) the event $\mathcal{N} (v)$ holds with probability $1- \exp (-\Theta (d^{1/3}))\ge 1-\eps/40$, for large enough $n$, and that it is independent of $\mathcal S_0$. Therefore, we obtain
	$$\PPo{ S_2(v)=-1 \mid \mathcal S_0}\le \PPo{ S_2(v)=-1 \mid \mathcal S_0 \cap \mathcal{N} (v)}+1-\PPo{\mathcal{N} (v)}\le \frac{\eps}{20}.$$
	This yields $\PPo{S_2 (v) = +1 \mid \Ev{c}} \ge 1- \eps/20$.
\end{proof}

\subsection{The expectation  of $X_2(v)$: proof of Lemma~\ref{lem:expectation}} \label{sec:rounds2}
Fix $\Gamma\subseteq V\setminus \{v\}$ satisfying $\big||\Gamma|-d\big|\le d^{2/3}$.
Recall that throughout this section we are working on the conditional space $\mathcal S_{0}\cap \mathcal N_{\Gamma}(v)$.
So to ease notation we will drop this conditioning from the probabilities in this as well as in the next section.
Consider the set $V_n \setminus \{v,u\}$ and split it into three parts $V_+,V_-$ and $V_{++}$ such that $V_+ \cup V_{++}$ is the set of vertices with initial state $+1$, while $V_-$ is the set of vertices with initial state $-1$ and in addition $|V_+|=|V_-|$. Note that $|V_+|=|V_-|=\frac{1}{2} (n - 2c\sqrt{n}) - {\bf 1}(S_0 (u) =-1) -{\bf 1}(S_0 (v) =-1)$ and $|V_{++}|=(n-2) - 2|V_+|=2c\sqrt{n} +2({\bf 1}(S_0 (u) =-1) +{\bf 1}(S_0 (v) =-1))-2$.

Clearly, we have 
\begin{equation} \label{eq:0th_lowerbound}
\PPo{S_1 (u) =+1} \geq 
\PPo { |N(u) \cap V_+| + |N(u) \cap V_{++}| \ge |N(u) \cap V_-| + 2},
\end{equation}
as the latter is the probability that $S_1 (u)=+1$ under the assumption that  
$\st{0} (u) = \st{0} (v) = -1$. 

For brevity, we set 
$$n_+(u)=|N(u) \cap V_+|,\ n_{++}(u)=|N(u) \cap V_{++}|, \text{ and } n_-(u)= |N(u) \cap V_-|.$$

We will bound \eqref{eq:0th_lowerbound} from below, conditioning on the value of $n_{++} (u)$, and 
we are going to consider several cases depending on the range of this value. 

Note that $n_+(u), n_-(u) \sim \Bin{|V_+|,\frac{d}{n}}$ and $\ n_{++}(u)\sim \Bin{|V_{++}|,\frac{d}{n}}$. Thus
$$ \Ex{n_+(u)}, \Ex{n_-(u)} =\Theta (d), \ \mbox{whereas} \ \Ex{n_{++} (u)}= \frac{2cd(1+o(1))}{\sqrt{n}}.$$

Set $\mm{u}:= \Ex{n_{++} (u)}$. The proof hinges on $\mm{u}$ being large enough. This is achieved by requiring the average degree to be large enough, in fact this is the only point in the proof of Lemma~\ref{lem:2-steps-lemma_I} where this condition is required. By the choice of $\lambda=\max\{c^{-2},\beta^2\}$\, for $n$ sufficiently large we have 
\begin{equation}\label{eq:expgain}
\mm{u}=\frac{2cd(1+o(1))}{\sqrt{n}} \ge c\lambda \geq \lambda^{1/2}\ge \beta.
\end{equation}

We write 
\begin{equation}\label{eq:Sigmas}
\PPo {n_+(u) + n_{++}(u) \ge n_- (u) + 2}  
\ =\ \Sigma_0 +\Sigma_1 + \Sigma_2+\Sigma_3,
\end{equation}
where 
\begin{equation*}
\begin{split}
&\Sigma_0:=\sum_{0\leq k \leq3} \PPo{n_{++}(u) =k} \cdot 
\PPo {n_+(u) + k \geq n_- (u) + 2}, \\
&\Sigma_1:=\sum_{4\leq k \leq \mm{u}/2} \PPo{n_{++}(u) =k} \cdot 
\PPo {n_+(u) + k \geq n_- (u) + 2}, \\
&\Sigma_2:=\sum_{\mm{u}/2 < k \leq 2 \mm{u}} \PPo{n_{++}(u) =k} \cdot 
\PPo {n_+(u) + k \geq n_- (u) + 2},\\
&\Sigma_3:=\sum_{2\mm{u} < k } \PPo{n_{++}(u) =k} \cdot 
\PPo {n_+(u) + k \geq n_- (u) + 2}.
\end{split}
\end{equation*}

We first derive a lower bound on $\Sigma_0$.
\begin{claim} \label{clm:first_6_terms}
	We have 
	$$\Sigma_0
	\ \geq \ \frac12 \cdot \sum_{k=0}^{3} \PPo{n_{++} (u) = k}. $$
\end{claim}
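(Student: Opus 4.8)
The plan is to write $\Sigma_0=\sum_{k=0}^{3}w_k f_k$, where $w_k:=\PPo{n_{++}(u)=k}$ and $f_k:=\PPo{n_+(u)+k\ge n_-(u)+2}$, and to prove $\sum_{k=0}^{3}w_k f_k\ge \tfrac{1}{2}\sum_{k=0}^{3}w_k$. Setting $D:=n_+(u)-n_-(u)$, we have $f_k=\PPo{D\ge 2-k}$, so $f_0=\PPo{D\ge 2}$, $f_1=\PPo{D\ge 1}$, $f_2=\PPo{D\ge 0}$ and $f_3=\PPo{D\ge -1}$. The tempting first move is to bound each factor $f_k$ from below by $\tfrac{1}{2}$; this works for $k=2,3$ but \emph{fails} for $k=0,1$, since $\PPo{D\ge 1}$ and $\PPo{D\ge 2}$ are both strictly below $\tfrac{1}{2}$. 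Overcoming this termwise failure is the conceptual crux of the argument.

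The key observation is that, because $|V_+|=|V_-|$, the variables $n_+(u)$ and $n_-(u)$ are i.i.d.\ (both $\Bin{|V_+|,d/n}$), so $D$ is symmetric about $0$: $\PPo{D=j}=\PPo{D=-j}$ for every integer $j$. Symmetry then yields the two pairing identities $f_0+f_3=1$ and $f_1+f_2=1$; for instance $f_3=\PPo{D\ge -1}=\PPo{D\le 1}=1-\PPo{D\ge 2}=1-f_0$. In particular $f_2,f_3\ge \tfrac{1}{2}\ge f_0,f_1$. I would therefore group the four terms into the pairs $(k=0,k=3)$ and $(k=1,k=2)$ and write, using $f_0-\tfrac{1}{2}=-(f_3-\tfrac{1}{2})$ and $f_1-\tfrac{1}{2}=-(f_2-\tfrac{1}{2})$,
$$\Sigma_0-\tfrac{1}{2}\sum_{k=0}^{3}w_k=\sum_{k=0}^{3}w_k\left(f_k-\tfrac{1}{2}\right)=(w_3-w_0)\left(f_3-\tfrac{1}{2}\right)+(w_2-w_1)\left(f_2-\tfrac{1}{2}\right).$$

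It then remains to verify that the binomial weights are non-decreasing on $\{0,1,2,3\}$, i.e.\ $w_0\le w_1\le w_2\le w_3$. Since $n_{++}(u)\sim \Bin{|V_{++}|,d/n}$ has mean $\mm{u}\ge\beta$ by \eqref{eq:expgain}, the ratio $w_{k+1}/w_k=\frac{(|V_{++}|-k)(d/n)}{(k+1)(1-d/n)}$ exceeds $1$ for every $k\le 2$ once $\beta$ is a large enough constant: the numerator equals $\mm{u}-k\,d/n\ge\beta-3$ while the denominator is at most $3$. Combined with $f_3\ge f_2\ge \tfrac{1}{2}$, both products in the displayed identity are non-negative, which gives $\Sigma_0\ge \tfrac{1}{2}\sum_{k=0}^{3}w_k$, as required. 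The only genuinely delicate point is ensuring the weight monotonicity holds across the whole range $k\in\{0,1,2,3\}$, and this is precisely where the large-average-degree hypothesis enters, through the lower bound $\mm{u}\ge\beta$.
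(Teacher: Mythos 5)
Your proof is correct and takes essentially the same route as the paper's: the identical pairing of $k=0$ with $k=3$ and $k=1$ with $k=2$, the same symmetry identity for $n_+(u)-n_-(u)$ (the paper writes the paired probabilities as $\tfrac12 \pm s_\alpha$, which is exactly your $f_0+f_3=f_1+f_2=1$), and the same use of monotonicity of the binomial weights below the mean $\mm{u}\ge\beta$. The only cosmetic difference is that you verify the weight monotonicity by an explicit ratio computation, whereas the paper simply invokes that the binomial probability mass function is increasing up to its mean.
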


\begin{proof}[Proof of Claim~\ref{clm:first_6_terms}]
	Recall that $n_+(u), n_- (u)$ are identically distributed. Therefore, for any integer $\alpha$ 
	we can write 
	\begin{equation*}
	\begin{split} 
	\PPo{n_+(u) \geq n_- (u) + 1 + \alpha} &= 1- \PPo{n_+(u) < n_- (u) + 1 + \alpha}\\
	&= 1- \PPo{n_+(u) \leq n_- (u) + \alpha} \\
	&=1- \PPo{n_-(u) \leq n_+ (u) + \alpha}.
	\end{split}
	\end{equation*} 
	This can be re-written as
	$$ \PPo{n_+(u)+ 1 - \alpha \geq n_- (u) + 2} + \PPo{n_+(u) + 2+\alpha \geq n_- (u) +2} =1.
	$$
	Also, note that for $\alpha \geq 0$ we have 
	$$  \PPo{n_+(u)+ 1 - \alpha \geq n_- (u) + 2} <  
	\PPo{n_+(u) + 2+\alpha \geq n_- (u) +2}. 
	$$
	Thereby, we can write 
	$$ \PPo{n_+(u) + 2+\alpha \geq n_- (u) +2} = \frac12 + s_\alpha, $$
	for some $s_\alpha > 0$ when $\alpha\ge 0$, whereby 
	$$  \PPo{n_+(u)+ 1 - \alpha \geq n_- (u) + 2}  = \frac12 - s_\alpha.$$
		Using these, we can write 
	\begin{equation*}
	\begin{split} 
	&\PPo{n_{++}(u) = 1- \alpha} \cdot 
	\PPo{n_+(u)+ 1 - \alpha \geq n_- (u) + 2} +  \\
	& \hspace{3cm}
	\PPo{n_{++}(u) = 2 +\alpha} \cdot 
	\PPo{n_+(u) + 2+\alpha \geq n_- (u) +2} =  \\
	& \PPo{n_{++}(u) = 1- \alpha} \cdot 
	\left( \frac12 - s_\alpha \right) +
	\PPo{n_{++}(u) = 2 +\alpha} \cdot 
	\left(\frac12 + s_\alpha \right) \\ 
	&=\frac12\cdot  \left (\PPo{n_{++}(u) = 1- \alpha} + \PPo{n_{++}(u) = 2 +\alpha}  
	\right) \\
	&\hspace{3cm}+ s_\alpha \left(\PPo{n_{++}(u) = 2 +\alpha}  -  \PPo{n_{++}(u) = 1- \alpha} \right).
	\end{split}
	\end{equation*}
	But $s_\alpha > 0$ and when $0\le \alpha<2$ (or equivalently $\alpha\in \{0,1\}$), for $\beta$ large enough by \eqref{eq:expgain} we have $2+\alpha<\mm{u}$. Thus 
	$\PPo{n_{++}(u) = 2 +\alpha}  > \PPo{n_{++}(u) = 1- \alpha}$, whereby we conclude that the second summand is positive. 
	Hence for $0\le \alpha<2$ we have,
	\begin{equation} \label{eq:lower_bound_term}
	\begin{split} 
	&\PPo{n_{++}(u) = 1- \alpha} \cdot 
	\PPo{n_+(u)+ 1 - \alpha \geq n_- (u) + 2} +  \\
	& \hspace{3cm}
	\PPo{n_{++}(u) = 2 +\alpha} \cdot 
	\PPo{n_+(u) + 2+\alpha \geq n_- (u) +2} \\
	&> \frac12\cdot  \left (\PPo{n_{++}(u) = 1- \alpha} + \PPo{n_{++}(u) = 2 +\alpha}  
	\right).
	\end{split}
	\end{equation}
	Now, we pair up the four terms of $\Sigma_0$ (for $k\in\{0,1,2,3\}$) using the value of $\alpha$. 
	In particular, $\alpha = 0$ corresponds to $k\in\{1, 2\}$, and $\alpha =1$ corresponds to $k\in \{0, 3\}$. 
	In other words, we write 
	\begin{equation*}
	\begin{split}
	&\Sigma_0=\sum_{k=0}^{3} \PPo{n_{++}(u) =k} \cdot 
	\PPo {n_+(u) + k \geq n_- (u) + 2}\\ 
	&= \sum_{\alpha =0}^1 \left( \PPo{n_{++}(u) = 1- \alpha} \cdot 
	\PPo{n_+(u)+ 1 - \alpha \geq n_- (u) + 2} +  \right. \\
	& \hspace{3cm}
	\left. \PPo{n_{++}(u) = 2 +\alpha} \cdot 
	\PPo{n_+(u) + 2+\alpha \geq n_- (u) +2} \right) \\ 
	&\stackrel{\eqref{eq:lower_bound_term}}{>} 
	\frac12 \sum_{\alpha =0}^1 \left (\PPo{n_{++}(u) = 1- \alpha} + \PPo{n_{++}(u) = 2 +\alpha}  \right) \\
	& =\frac12 \sum_{k=0}^3\PPo{n_{++}(u) =k},
	\end{split}
	\end{equation*}
	which concludes the proof of the claim. 
\end{proof}
To obtain a lower bound on $\Sigma_1$, we use the following simple fact that for any integer $k \geq 0$
\begin{equation} \label{eq:identical_binomials}
\PPo{n_+(u) + k \geq n_-(u)} > 1/2.
\end{equation}
To see this, note that since
\begin{equation*}
\begin{split}
\PPo{n_+ (u) +k \geq n_- (u)} + \PPo{n_+ (u) + k < n_- (u)}= 1,
\end{split}
\end{equation*}
the result follows if $\PPo{n_+ (u) +k \geq n_- (u)} > \PPo{n_+ (u) + k < n_- (u)}$.
Since $n_+ (u)$ and $n_- (u)$ are identically distributed, 
$$ \PPo{n_+ (u) +k < n_- (u)} = \PPo{n_- (u) +k < n_+ (u)}.$$ 
But also since $k\ge 0$, we have
$$  \PPo{n_- (u) +k < n_+ (u)} < \PPo{n_- (u) \leq n_+ (u)+k},$$
and~\eqref{eq:identical_binomials} follows. 

Therefore, we have
\begin{equation}\label{eq:Sigma_1_lower_bound}
\begin{split}
\Sigma_1 &=\sum_{4\leq k \leq \mm{u}/2} \PPo{n_{++}(u) = k} \cdot \PPo{n_+ (u) + k \geq n_- (u) +2} \\
&> 
\frac12 \cdot \sum_{4\leq k \leq \mm{u}/2} \PPo{n_{++} (u) = k}. 
\end{split}
\end{equation}
An analogous argument implies
\begin{equation}\label{eq:Sigma_3_lower_bound}
\Sigma_3 > \frac12 \cdot \sum_{2\mm{u}< k} \PPo{n_{++} (u) = k}.
\end{equation}

We now turn to $\Sigma_2$, and start by providing a bound on $\PPo{n_+(u) + \ell \geq n_- (u)}$.

\begin{claim}\label{clm:boost} When $n$ is large enough, for every $\ell$ with $\mm{u}/2-2< \ell \le 2\mm{u}-2$ there exists a constant $\xi$ independent of $\eps$ such that
	$$ \PPo{n_+(u) + \ell \geq n_- (u)} > \frac12 + \xi \cdot \frac{\ell}{\sqrt{d}}.
	$$
\end{claim}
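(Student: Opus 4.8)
The plan is to recast the event in terms of the centred, symmetric random variable $D := n_+(u) - n_-(u)$. Since $n_+(u)$ and $n_-(u)$ are independent and identically distributed (both $\Bin{|V_+|,d/n}$, on disjoint vertex sets), the law of $D$ is symmetric about $0$, and a short computation gives
$$\PPo{n_+(u) + \ell \ge n_-(u)} = \PPo{D \ge -\ell} = \frac12 + \frac12\,\PPo{|D| \le \ell} = \frac12 + \frac12\sum_{j=-\ell}^{\ell}\PPo{D = j}.$$
Hence it suffices to produce an absolute constant $c_1>0$ with $\PPo{D=j} \ge c_1/\sqrt{d}$ for every integer $j$ with $|j|\le \ell$: summing the $2\ell+1 > 2\ell$ terms then yields $\PPo{|D|\le\ell} > 2\ell\, c_1/\sqrt{d}$, and the claim follows with $\xi = c_1$.

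To bound a single point probability I would condition on $n_-(u)$ and keep only its typical values. Writing $\mu := |V_+|\,d/n$ and $\sigma := \sqrt{\Var{n_+(u)}}$, independence gives $\PPo{D=j} = \sum_m \PPo{n_-(u)=m}\,\PPo{n_+(u)=m+j} \ge \sum_{|m-\mu|\le\sigma}\PPo{n_-(u)=m}\,\PPo{n_+(u)=m+j}$. For $m$ in this window and $|j|\le\ell$, the argument $m+j$ stays within $(1+C)\sigma$ of $\mu$ provided $\ell \le C\sigma$, so the local limit theorem (Theorem~\ref{cor:lll}) applied to the genuine binomial $n_+(u)$ gives $\PPo{n_+(u)=m+j} \ge \tfrac{1}{\sqrt{2\pi}\sigma}e^{-(1+C)^2/2} - \gamma/\sigma^2 \ge c_2/\sigma$ uniformly, once $n$ (hence $d$) is large. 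Factoring this out leaves $\sum_{|m-\mu|\le\sigma}\PPo{n_-(u)=m} = \PPo{|n_-(u)-\mu|\le\sigma}$, which is bounded below by an absolute constant $c_3$ (the mass of a binomial within one standard deviation of its mean). Thus $\PPo{D=j} \ge c_2 c_3/\sigma$. The device here is that I never need a local limit theorem for the non-binomial difference $D$; restricting the convolution to the bulk of $n_-(u)$ reduces everything to the single-binomial estimate already available.

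It remains to verify the two quantitative inputs, and this is exactly where $\eps \le 1$ must be used to keep $\xi$ free of $\eps$. First, $\sigma^2 = |V_+|\tfrac dn(1-\tfrac dn) = \Theta(d)$, so $c_2 c_3/\sigma = \Theta(1/\sqrt{d})$; this uses that $p=d/n$ is bounded away from $1$ (the near-complete regime only makes unanimity easier and can be treated separately). Second, and crucially, the constant $C$ in $\ell \le C\sigma$ must be absolute, since $c_2$ depends on $C$. This holds because $\ell \le 2\mm{u} - 2 < 2\mm{u} = \tfrac{4cd}{\sqrt{n}}(1+o(1))$ while $\sigma = \Theta(\sqrt{d})$, so $\ell/\sigma = O\!\left(c\sqrt{p/(1-p)}\right)$; since $c = \sqrt{2\pi}\eps/20 \le \sqrt{2\pi}/20$ for $\eps\le 1$, the ratio $\ell/\sigma$ is bounded by an absolute constant. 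Consequently $c_2,c_3,c_1$ and $\xi$ are all independent of $\eps$, as required.

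The main obstacle is precisely the uniform point-probability lower bound $\PPo{D=j}\ge c_1/\sqrt{d}$: the variable $D$ is a difference of two binomials and is not itself binomial, so Theorem~\ref{cor:lll} does not apply to it directly, and the restricted-convolution argument above is the route I would take to obtain it using only the single-binomial statement. The one further point demanding care is the $\eps$-uniformity of the constants, which, as explained, is secured by the bound $\ell = O(\sigma)$ coming from $\mm{u} = \Theta(d/\sqrt{n})$ (see~\eqref{eq:expgain}) together with $\eps \le 1$.
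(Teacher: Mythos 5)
Your treatment of the main regime is correct and is in essence the paper's own argument: the paper decomposes $\PPo{n_+(u)+\ell\ge n_-(u)}$ as $\PPo{n_+(u)\ge n_-(u)}+\sum_{i=1}^{\ell}\PPo{n_+(u)+i=n_-(u)}$ (your symmetry identity $\frac12+\frac12\PPo{|D|\le \ell}$ is a repackaging of the same decomposition), and it bounds each point probability of the difference exactly as you propose, by restricting the convolution to the bulk of $n_-(u)$ (the paper uses a two-standard-deviation window and Chebyshev where you use one standard deviation and a CLT-type fact) and applying Theorem~\ref{cor:lll} to the single binomial $n_+(u)$, never to the non-binomial difference. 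So far, so good.

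The genuine gap is the regime where $p$ is close to $1$, which you dismiss in a parenthesis (``the near-complete regime only makes unanimity easier and can be treated separately''). The claim is asserted for the whole range $\lambda n^{-1/2}\le p\le 1$, and in that regime both pillars of your argument fail: if, say, $1-p=O(1/n)$, then $\sigma^2=|V_+|\,p(1-p)=O(1)$, so the local limit lower bound $\frac{1}{\sqrt{2\pi}\sigma}e^{-(1+C)^2/2}-\gamma/\sigma^2$ is negative (the error term dominates), and $\ell/\sigma$ is unbounded since $\ell>\mm{u}/2-2=\Theta(c\sqrt{n})$, so no absolute constant $C$ exists. Note also that your own bound $\ell/\sigma=O\bigl(c\sqrt{p/(1-p)}\bigr)$ is only $O(1)$ when $1-p=\Omega(c^2)$, so the cutoff defining ``bounded away from $1$'' is itself $\eps$-dependent; the excluded regime is not a degenerate afterthought, and whatever you do there must still produce a $\xi$ free of $\eps$. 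Finally, ``unanimity is easier'' is not a proof of the stated inequality: in this regime $\ell$ can be of order $\mm{u}=\Theta(c\sqrt{d})$, so the additive term $\xi\ell/\sqrt{d}$ is of constant order and the inequality is not automatic. The paper closes this case ($d>n/(1+c^2/162)$) with a separate argument: there $\Var{n_+(u)}^{1/2}\le \mm{u}/32$, so a Chernoff bound gives $\PPo{n_+(u)+\ell\ge n_-(u)}\ge (1-e^{-2})^2$, and one checks $\ell\le 2\mm{u}\le\sqrt{d}$ (using $c\le\sqrt{2\pi}/20$), so that $(1-e^{-2})^2>\frac12+\xi\ell/\sqrt{d}$ for a small absolute $\xi$. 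You would need to supply an argument of this kind to make the proof complete.
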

\begin{proof}
We start by considering the case when $d>n/(1+c^2/162)$.
Note that under this assumption and by \eqref{eq:expgain} we have, for large enough $n$,
\begin{equation}\label{eq:smallvar}
\Var{n_+(u)}^{1/2}\leq \sqrt{\frac{n}{2}\frac{d}{n}\left(1-\frac{d}{n}\right)}\le \frac{cd}{18n^{1/2}}\le\frac{1}{32}\mm{u}.
\end{equation}
When $n$ is large enough, we have $ \mm{u}/2-2\ge \mm{u}/4$.
Since $n_+(u)$ and $n_-(u)$ are identically distributed and independent, for any $\ell\ge \mm{u}/2-2\ge \mm{u}/4$  we have
\begin{equation*}
\begin{split}
&\PPo{n_+(u) + \ell \geq n_- (u)} \\
&\hspace{10ex}\ge \PPo{n_+(u) \ge \Ex{n_+(u)}-\mm{u}/8 }\PPo{n_-(u) \le \Ex{n_-(u)}+\mm{u}/8}\\
&\hspace{10ex}\ge \left(1-\exp\left(-\frac{\mm{u}^2}{128(\Var{n_+(u)}+\mm{u}/24)}\right)\right)^2,
\end{split}
\end{equation*}
where the last step follows from the Chernoff bound. Together with \eqref{eq:smallvar} this implies
\begin{equation*}
\begin{split}
\PPo{n_+(u) + \ell \geq n_- (u)}&\ge \left(1-\exp\left(-\frac{\mm{u}^2}{128((\mm{u}/32)^2+\mm{u}/24)}\right)\right)^2\\
&\ge \left(1-e^{-2}\right)^2,
\end{split}
\end{equation*}
for large enough $\beta$. The claim follows as $\left(1-e^{-2}\right)^2>1/2$ and $\ell\le 2\mm{u}\le \sqrt{d}$.

Now assume $d\le n/(1+c^2/162)$. Note that in this case, when $n$ is large enough,
\begin{equation}\label{eq:largevar}
\Var{n_+(u)}^{1/2}=\sqrt{\frac{n(1-o(1))}{2}\frac{d}{n}\left(1-\frac{d}{n}\right)}\ge \frac{cd}{20n^{1/2}}\ge \frac{1}{42}\mm{u}.
\end{equation}

For any positive integer $\ell$ we  write 
\begin{equation}\label{eq:binincrease}
\begin{split}
&\PPo{n_+(u) + \ell \geq n_- (u)} =
\PPo{n_+ (u) \geq n_-(u)} + \sum_{i=1}^{\ell} \PPo {n_+ (u) + i = n_-(u)}. 
\end{split}
\end{equation}
By~\eqref{eq:identical_binomials}, we obtain
$$ \PPo{n_+ (u) \geq n_- (u)} > \frac12. $$
This together with \eqref{eq:binincrease} gives
\begin{equation}\label{eq:binshiftlower}
\PPo{n_+(u) + \ell \geq n_- (u)} >
\frac12 + \sum_{i=1}^{\ell} \PPo {n_+ (u) + i = n_-(u)}. 
\end{equation}
To bound the terms of the sum from below, we condition on the value of $n_- (u)$ to obtain
\begin{equation*}
\begin{split}
\PPo {n_+ (u) + i = n_-(u)} \geq 
\sum_{s\in \big[ \Ex{n_-(u)} \pm 2\Var{n_- (u)}^{1/2}\big]}
\PPo{n_- (u) =s} \cdot \PPo{n_+ (u) = s -i}. 
\end{split}
\end{equation*}
Note that both $n_+(u)$ and $n_-(u)$ are binomially distributed with the same parameters.  
By Theorem~\ref{cor:lll} there exists $\xi'>0$ such that for any $s \in [\Ex{n_+(u)} \pm 2\Var{n_+ (u)}^{1/2}]$ and for any $i=1,\ldots, \ell$, where $\ell \leq 2\mm{u}\stackrel{\eqref{eq:largevar}}{\le} 84\Var{n_+ (u)}^{1/2}$, we have 
\begin{equation*}
\PPo{n_+ (u) = s-i} \geq \frac{\xi'}{\sqrt{d}}.
\end{equation*}
Therefore, since $n_+(u), n_-(u) \sim \Bin{|V_+|,\frac{d}{n}}$, there exists $\xi>0$ such that
\begin{equation*}
\begin{split}
\PPo {n_+ (u) + i = n_-(u)}
&\ge \sum_{s\in  \big[\Ex{n_-(u)} \pm 2\Var{n_- (u)}^{1/2} \big]} 
\PPo{n_- (u) =s} \cdot \PPo{n_+ (u) = s -i}\\
&\ge \frac{\xi'}{\sqrt{d}} \sum_{s\in  \big[\Ex{n_-(u)} \pm 2\Var{n_- (u)}^{1/2}\big]} \PPo{n_- (u) =s} 
\ge \frac{\xi}{\sqrt{d}}, 
\end{split}
\end{equation*}
 where the last inequality follows from Chebyshev's inequality. 
 
Together with \eqref{eq:binshiftlower}, for any such $\ell$ we have 
$$ \PPo{n_+(u) + \ell \geq n_- (u)} > \frac12 + \xi \cdot \frac{\ell}{\sqrt{d}}.
$$
\end{proof}

Now we will use Claim~\ref{clm:boost} to derive a lower bound on $\Sigma_2$:
\begin{equation} \label{eq:Sigma2_LB}
\begin{split}
\Sigma_2 &= \sum_{\mm{u}/2 < k \leq 2\mm{u}} \PPo{n_{++}(u) =k} \cdot 
\PPo {n_+(u) + k \geq n_- (u) + 2} \\
&> \frac12 \cdot \sum_{\mm{u}/2 < k \leq 2\mm{u}} \PPo{n_{++}(u) =k} + 
\frac{\xi}{\sqrt{d}} \cdot \sum_{k=\mm{u}/2+1}^{2\mm{u}} \PPo{n_{++}(u) =k} \cdot (k-2).
\end{split}
\end{equation}
We will show that the second sum is close to $\mm{u}$, which is $(1+o(1))2cd/\sqrt{n}$ by \eqref{eq:expgain}.
This will imply that the second summand is of order $c\sqrt{d/n}$.

Clearly, we have
\begin{equation*}
\begin{split}
 &\sum_{\mm{u}/2 < k \leq 2\mm{u}} \PPo{n_{++}(u) =k} \cdot (k-2) \\
 &\hspace{10ex}\ge (\mm{u}/2 - 2) \cdot \PPo{\mm{u}/2< n_{++} (u) \le 2\mm{u}}.
 \end{split}
 \end{equation*}
By the Chernoff bound we have for large enough $\beta$ 
$$\PPo{\mm{u}/2< n_{++} (u) \le 2\mm{u}}\ge 1-2\exp\left(- \frac{cd}{8\sqrt{n}}\right)\stackrel{\eqref{eq:expgain}}{\ge}1-2\exp\left(- \frac{\beta}{18}\right)>\frac{1}{2}.$$ 
So for large enough $\beta$ 
$$ \sum_{\mm{u}/2 < k \leq 2\mm{u}} \PPo{n_{++}(u) =k} \cdot (k-2)  > \frac{1}{6} \mm{u}.$$
Substituting this into~\eqref{eq:Sigma2_LB} we deduce that 
\begin{equation} \label{eq:Sigma2_LBFinal}
\Sigma_2 > \frac12 \cdot \sum_{k=\mm{u} \leq k\leq 2\mm{u}} \PPo{n_{++}(u) =k} + 
\frac{\xi c}{6} \cdot \left( \frac{d}{n} \right)^{1/2}.
\end{equation}
Therefore, Claim~\ref{clm:first_6_terms}, \eqref{eq:Sigma_1_lower_bound}, \eqref{eq:Sigma_3_lower_bound}, and \eqref{eq:Sigma2_LBFinal} in \eqref{eq:Sigmas} give 
\begin{equation*}
\PPo {n_+(u) + n_{++}(u) \ge n_- (u) + 2} >
\frac12 \cdot \sum_{0\leq k} \PPo{n_{++}(u) =k} + 
\frac{\xi c}{6} \cdot \left( \frac{d}{n} \right)^{1/2}.
\end{equation*}
By \eqref{eq:0th_lowerbound} and because $n_{++}$ is a non-negative integer we have
\begin{equation} \label{eq:prob_step1}
\PPo{S_1 (u) =+1 \mid \ \mathcal S_{0}\cap \mathcal N_{\Gamma}(v)}\geq 
\PPo {n_+(u) + n_{++}(u) \ge n_- (u) + 2} >
\frac12 + \frac{\xi c}{6} \cdot \left( \frac{d}{n} \right)^{1/2}.
\end{equation}
Now, $d - d^{2/3}  > 6d/7$ for $n$ sufficiently large. 
So~\eqref{eq:prob_step1} yields  
\begin{equation*} \Ex {X_2 (v)} > \frac{|\Gamma|}{2} 
+ \frac{\xi c}{7} \cdot \left( \frac{d^3}{n} \right)^{1/2},
\end{equation*}
completing the proof of Lemma~\ref{lem:expectation}.

\subsection{The variance of $X_2(v)$: proof of Lemma~\ref{lem:variance}}\label{sec:var2}

We will now bound the variance of  $X_2 (v) = \sum_{u \in N(v)} {\bf 1}(S_1 (u) =+1)$ conditional on $\mathcal S_0\cap \mathcal N_{\Gamma}(v)$, for any $\Gamma\subseteq V\setminus \{v\}$ satisfying $\big||\Gamma|-d\big|\le d^{2/3}$. Recall that due to the conditioning we have revealed the initial state of every vertex and the edges adjacent to $v$, however we have not examined any further edges so far.

We let $I_u ={\bf 1}(S_1 (u) =+1)$, for all $u \in N(v)$ and write 
$$\Var{X_2 (v)} = \sum\nolimits_{u,u' \in N(v)} \Cov{I_u}{I_{u'}}.$$ 
Let $E$ denote the edge set of $G(n,p)$.

\begin{claim} \label{clm:cov} 
For $u,u' \in N(v)$, such that $u\neq u'$, we have 
 \begin{equation}\label{eq:covfin}
 \begin{split}
\Cov{I_u}{I_{u'}} 
&=p(1-p)\left(\PPo{I_u=1\mid uu' \in E}-\PPo{I_u=1\mid uu' \not \in E}\right) \times \\
&\hspace{0.5cm} \left(\PPo{I_{u'}=1 \mid uu'\in E}  -  \PPo{I_{u'} =1 \mid uu'\not \in E } \right).
\end{split}
\end{equation}
\end{claim}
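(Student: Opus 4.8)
The plan is to isolate the unique source of dependence between $I_u$ and $I_{u'}$ and then apply the law of total covariance. Since we work on the conditional space $\mathcal S_0\cap\mathcal N_\Gamma(v)$, the initial state $S_0$ is fixed and the edges incident to $v$ are revealed. Because $S_1(w)$ is a deterministic function of $S_0$ and of the edges incident to $w$, the indicator $I_u=\Ind{S_1(u)=+1}$ is measurable with respect to the family of edge indicators $\{\Ind{uw\in E}: w\in V_n\setminus\{u,v\}\}$ (the edge $uv$ being already exposed), and likewise $I_{u'}$ is measurable with respect to $\{\Ind{u'w\in E}: w\in V_n\setminus\{u',v\}\}$. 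First I would observe that, since $u\neq u'$, the only pair occurring in both families is the edge $uu'$: an unordered pair $\{u,w\}$ equals $\{u',w'\}$ only when $\{u,w\}=\{u',w'\}=\{u,u'\}$.

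Next I would condition on the status of this shared edge. Let $B:=\Ind{uu'\in E}\sim\Be{p}$. Conditional on $B$ (and on $\mathcal S_0\cap\mathcal N_\Gamma(v)$), the indicator $I_u$ is a function of the independent edges $\{\Ind{uw\in E}: w\notin\{u,u',v\}\}$ together with the now-fixed value $B$, while $I_{u'}$ is a function of the disjoint independent family $\{\Ind{u'w\in E}: w\notin\{u,u',v\}\}$ together with $B$. These two edge families are disjoint and mutually independent in $\Gnp$, so $I_u$ and $I_{u'}$ are conditionally independent given $B$; in particular $\Cov{I_u}{I_{u'}\mid B}=0$.

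I would then apply the law of total covariance,
\[
\Cov{I_u}{I_{u'}}=\Ex{\Cov{I_u}{I_{u'}\mid B}}+\Cov{\Ex{I_u\mid B}}{\Ex{I_{u'}\mid B}},
\]
whose first term vanishes by the previous step. For the second term, note that $\Ex{I_u\mid B}$ is a two-valued function of the Bernoulli variable $B$: it equals $\PPo{I_u=1\mid uu'\in E}$ on $\{B=1\}$ and $\PPo{I_u=1\mid uu'\notin E}$ on $\{B=0\}$, and similarly for $I_{u'}$. For any functions $f,g$ of a single $\Be{p}$ variable $B$ one has the elementary identity $\Cov{f(B)}{g(B)}=p(1-p)\,(f(1)-f(0))(g(1)-g(0))$, obtained by expanding $\Ex{f(B)g(B)}-\Ex{f(B)}\Ex{g(B)}$ and using $p-p^2=(1-p)-(1-p)^2=p(1-p)$. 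Substituting the two conditional probabilities for $f(1),f(0)$ and $g(1),g(0)$ yields exactly~\eqref{eq:covfin}.

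The main obstacle is the conditional-independence step: one must argue cleanly that, apart from $uu'$, no edge influences both $I_u$ and $I_{u'}$, and that revealing $N(v)=\Gamma$ (which fixes the edges $uv,u'v$ and rules out $vw$ for $w\notin\Gamma$) only freezes edges incident to $v$ and therefore does not couple the two otherwise independent edge families. Once this measurability-and-independence bookkeeping is in place, the remainder is the routine two-point covariance computation.
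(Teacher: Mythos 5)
Your proposal is correct and takes essentially the same route as the paper: both proofs hinge on the observation that, conditional on the status of the single shared pair $uu'$, the indicators $I_u$ and $I_{u'}$ are independent (since they otherwise depend on disjoint families of edge indicators), and both then reduce to the same two-point Bernoulli covariance algebra yielding the factor $p(1-p)$ times the product of differences of conditional probabilities. The only distinction is presentational: you invoke the law of total covariance so that the conditional-independence step kills the first term outright, whereas the paper expands $\PPo{I_u=1,I_{u'}=1}$ and the marginals by total probability over $\{uu'\in E\}$ and carries out the identical computation by hand.
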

\begin{proof}[Proof of Claim~\ref{clm:cov}]
Consider first two distinct vertices $u,u'\in N(v)$. We have
\begin{align}
\Cov{I_u}{I_{u'}}
& = \Ex{I_u I_{u'}} - \Ex{I_u}\cdot \Ex{I_{u'}} \nonumber\\ 
&=  \PPo{I_{u}=1, I_{u'}=1}-  \PPo{I_{u}=1} \cdot \PPo{I_{u'}=1}. \label{eq:cov0}
\end{align}

The first term of \eqref{eq:cov0} can be rewritten as
\begin{align*}
\PPo{I_u=1,I_{u'}=1} 
&=  \PPo{I_u=1, I_{u'}=1 \mid uu' \in E} \cdot \PPo{uu'\in E} \nonumber\\ 
&\hspace{2.5cm}+\PPo{I_u=1,I_{u'}=1 \mid uu' \not \in E} \cdot \PPo{uu' \not \in E} \nonumber\\
&=  \PPo{I_u=1, I_{u'}=1 \mid uu' \in E} \cdot p +\PPo{I_u=1,I_{u'}=1 \mid uu' \not \in E} \cdot (1-p),
\end{align*}
by the law of total probability. We further have
\begin{align*}
\PPo{I_u=1, I_{u'}=1 \mid uu' \in E} 
&= 	\PPo{I_u=1\mid uu' \in E} \cdot \PPo{I_{u'}=1 \mid uu' \in E},\\
\PPo{I_u=1,I_{u'}=1 \mid uu' \not \in E} 
&= \PPo{I_u=1\mid uu' \not \in E} \cdot \PPo{I_{u'}=1 \mid uu' \not \in E},
\end{align*}
because
the events $\{I_u=1 \}$ and $\{ I_{u'}=1 \}$ depend only on the edges that are incident to $u$ and $u'$, respectively. This is the case, as we are working on the conditional space where the initial state of the vertices has been realised and the states of $u$ and $u'$ after the first round  depend only on the  edges that are incident to these two vertices. 
Thus, if we condition on the status of the pair $uu'$, that is, whether it is an edge or not, then 
the events $\{ I_u= 1 \}$ and $\{I_{u'}=1\}$ are independent. 
Thus, the first term of \eqref{eq:cov0} becomes 
\begin{align}
\PPo{I_u=1,I_{u'}=1} 
&=\PPo{I_u=1\mid uu' \in E} \cdot \PPo{I_{u'}=1 \mid uu' \in E}\cdot p\nonumber\\ 
&\hspace{2.5cm}+\PPo{I_u=1\mid uu' \not \in E} \cdot \PPo{I_{u'}=1 \mid uu' \not \in E}\cdot (1-p).\label{eq:firstterm}
\end{align}

Furthermore, by the law of total probability, the probabilities in the second term of \eqref{eq:cov0} can be written as
\begin{align*}
\PPo{I_{u}=1} 
&= \PPo{I_{u}=1 \mid uu' \in E} \cdot \PPo{uu'\in E} + 
\PPo{I_{u}=1 \mid uu' \not \in E} \cdot \PPo{uu' \not \in E}\\
&= \PPo{I_{u}=1 \mid uu' \in E} \cdot p + 
\PPo{I_{u}=1 \mid uu' \not \in E} \cdot (1-p),\\
\PPo{I_{u'}=1}&= \PPo{I_{u'} =1 \mid uu'\in E}\cdot  p
+ \PPo{I_{u'} =1 \mid uu'\not \in E }\cdot  (1-p).
\end{align*}
Thus, the second term of \eqref{eq:cov0} becomes
\begin{align}
  \PPo{I_{u}=1} \cdot \PPo{I_{u'}=1} 
  &= \Big(\PPo{I_{u}=1 \mid uu' \in E} \cdot p + 
  \PPo{I_{u}=1 \mid uu' \not \in E} \cdot (1-p)\Big)\times \nonumber\\ &\hspace{1.5cm}\Big(\PPo{I_{u'} =1 \mid uu'\in E}\cdot  p
  + \PPo{I_{u'} =1 \mid uu'\not \in E }\cdot  (1-p)\Big).\label{eq:secondterm}
\end{align}

To ease notation, letting 
\begin{align*}
A_u&:=\PPo{I_{u}=1 \mid uu' \in E}, \quad
A_{u'}:=\PPo{I_{u'}=1 \mid uu' \in E}\\
B_{u}&:=\PPo{I_u=1\mid uu' \not \in E},\quad
B_{u'}:=\PPo{I_{u'}=1 \mid uu' \not \in E}
\end{align*}
and plugging \eqref{eq:firstterm} and \eqref{eq:secondterm} into \eqref{eq:cov0}, we obtain
\begin{align*}
\Cov{I_u}{I_{u'}}
& = p A_{u}A_{u'} + (1-p) B_{u}B_{u'} - \Big(pA_{u} + (1-p) B_{u} \Big)\cdot\Big(p A_{u'}+ (1-p) B_{u'} \Big)\\
& = p (1-p)  (A_{u} -B_{u} ) (A_{u'}-B_{u'}),
\end{align*}
as claimed.
\end{proof}

Next we will estimate $|\PPo{I_u=1\mid uu' \in E}-\PPo{I_u=1\mid uu' \not \in E}|$. 
First observe that the event $\{I_u=1\}$ on either of the two 
conditional spaces (i.e., $\{uu' \in E\}$ or $\{uu' \not \in E\}$) is a function of the same collection of independent Bernoulli-distributed 
random variables, namely the indicators of $uu''\in E$, for any $u'' \not = u'$. However, 
the functions that determine $\{I_u=1\}$  that are associated with the conditional spaces differ 
only slightly. 

We shall rely on the following claim. 
\begin{claim} \label{clm:close-sums}
	Let $\{Y_i\}_{i\in I \cup I'}$ be a family of i.i.d. Bernoulli-distributed random variables, where 
	the index sets $I,I'$ are disjoint.  
	Then for any integer $a$ we have 
	\begin{equation*}
	\PPo {\sum_{i\in I}Y_i + a +1\geq \sum_{i\in I'} Y_i}  - 
	\PPo {\sum_{i\in I}Y_i + a \geq \sum_{i\in I'} Y_i} \leq \max_j \left\{ \PPo{\sum_{i\in I'} Y_i=j} \right\}.
	\end{equation*} 
\end{claim}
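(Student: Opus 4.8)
The plan is to reduce the left-hand side to a single point-probability and then bound it by conditioning on one of the two (independent) partial sums. Write $X := \sum_{i\in I} Y_i$ and $Z := \sum_{i\in I'} Y_i$. Since $I$ and $I'$ are disjoint and the $Y_i$ are independent, $X$ and $Z$ are independent. Both $X$ and $Z$, together with $a$, are integer-valued, and the event $\{X + a \ge Z\}$ is contained in $\{X + a + 1 \ge Z\}$. Hence the difference of the two probabilities equals the probability of the set-difference event $\{X + a < Z \le X + a + 1\}$, which by integrality is exactly $\{Z = X + a + 1\}$. This is the key observation: the two nested events differ only on the single ``boundary'' value $Z - X = a+1$, so
$$
\PPo{\sum_{i\in I}Y_i + a +1\geq \sum_{i\in I'} Y_i} - \PPo{\sum_{i\in I}Y_i + a \geq \sum_{i\in I'} Y_i} = \PPo{Z = X + a + 1}.
$$

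It then remains to bound $\PPo{Z = X + a + 1}$. Conditioning on the value of $X$ and using the independence of $X$ and $Z$, I would write $\PPo{Z = X + a + 1} = \sum_{x} \PPo{X = x}\,\PPo{Z = x + a + 1}$. Bounding each factor $\PPo{Z = x + a + 1}$ by $\max_j \PPo{\sum_{i\in I'} Y_i = j}$ and using $\sum_x \PPo{X = x} = 1$ yields precisely the claimed inequality.

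There is essentially no serious obstacle here; the only point requiring care is the integrality argument that collapses the difference of the two nested events to the single event $\{Z - X = a+1\}$. Everything after that is a one-line conditioning on the independent sum $X$ and a trivial maximisation, so I expect the proof to be short.
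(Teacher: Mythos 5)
Your proof is correct and follows essentially the same route as the paper: both collapse the difference of the two nested events to the single point event $\{\sum_{i\in I'} Y_i = \sum_{i\in I} Y_i + a + 1\}$ via integrality, then expand this probability by independence as a sum over the joint values and bound the $I'$-factor by its maximum point probability. The only cosmetic difference is that the paper indexes the sum by the common value $j$ of both sums rather than by the value of $\sum_{i\in I} Y_i$, which changes nothing.
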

\begin{proof}[Proof of Claim~\ref{clm:close-sums}]
	Note that
	\begin{equation*}
	\begin{split}
	&\PPo {\sum_{i\in I}Y_i + a +1\geq \sum_{i\in I'} Y_i}  - 
	\PPo {\sum_{i\in I}Y_i + a \geq \sum_{i\in I'} Y_i}=\PPo {\sum_{i\in I}Y_i + a +1= \sum_{i\in I'} Y_i}.
	\end{split}
	\end{equation*}
	The result follows as 
	\begin{equation*}
	\PPo {\sum_{i\in I}Y_i + a +1= \sum_{i\in I'} Y_i}=\sum_{j=0}^{|I|+a+1}\PPo {\sum_{i\in I}Y_i + a +1= j}\PPo {\sum_{i\in I'} Y_i=j}.
	\end{equation*}
\end{proof}

We will apply the above claim in our setting in order to express the event $\{I_u =1 \}$. We set $I$ as the set of vertices in $V_n\setminus \{u,u',v\}$ with initial state $+1$, while $I'$ is the set of vertices in $V_n\setminus \{u,u',v\}$ with initial state $-1$, and for each $i \in I\cup I'$ the random variable $Y_i$ is the indicator 
that the corresponding edge exists. Setting $a=S_0(v)-{\bf 1}(S_0 (u) =-1)$ when $S_0(u')=+1$ and $a=S_0(v)-{\bf 1}(S_0 (u) =-1)+S_0(u')$ when $S_0(u')=-1$,
Claim~\ref{clm:close-sums} implies that 
\begin{equation*}
\left|\PPo{I_u=1 \mid uu'\in E}  -  \PPo{I_u =1 \mid uu'\not \in E }\right| \leq \max_j \left\{ \PPo{\sum_{i \in I'} Y_i = j} 
\right\}.
\end{equation*}
Now, note that $\sum_{i \in I'} Y_i$ follows the binomial distribution as a sum of $n/2-(1+o(1))c\sqrt{n}$ Bernoulli trials each having success probability $d/n$. 

Next we will distinguish between the cases $p\le 1-24 \gamma^2 n^{-1}$ and $1-24 \gamma^2 n^{-1}<p\le 1$, starting with the former.
The Local Limit Theorem (Theorem~\ref{cor:lll}) implies that 
$$  \max_j \left\{ \PPo{\sum_{i \in I'} Y_i = j} \right\} \le 2\sqrt{\frac{1}{2\pi (n/2-(1+o(1))c\sqrt{n}) p(1-p)}}\stackrel{c\le \sqrt{2\pi}/20}{\le} \sqrt{\frac{2}{np(1-p)}},$$ 
and thus
\begin{equation}
\left|\PPo{I_u=1 \mid uu'\in E}  -  \PPo{I_u =1 \mid uu'\not \in E }\right| \le \sqrt{\frac{2}{np(1-p)}}.\label{eq:cov1}
\end{equation}
An analogous argument implies
\begin{equation}
\left|\PPo{I_{u'}=1 \mid uu'\in E}  -  \PPo{I_{u'} =1 \mid uu'\not \in E }\right| \le \sqrt{\frac{2}{np(1-p)}}.\label{eq:cov2}
\end{equation}
Thus,~\eqref{eq:cov1} and \eqref{eq:cov2} in~\eqref{eq:covfin} yield
$$ \Cov{I_u}{I_{u'}} \le \frac{2}{n},$$
uniformly for all pairs $u,u' \in N(v)$. 
Since $N(v) < 2d$, for $n$ sufficiently large, and the variance of an indicator random variable is at most $1/4$ we then deduce that 
$$\Var{X_2 (v)} \le \frac{8 d^2}{n} + d \le 9d.$$
Now when $p>1-24\gamma^2 n^{-1}$, since the difference of any two probabilities is at most 1, we have by \eqref{eq:covfin} that
$$\Cov{I_u}{I_{u'}} \le p(1-p)\le \frac{24\gamma^2}{n},$$
and 
$$\Var{X_2 (v)}\le d+\frac{96\gamma^2 d^2}{n}\le (96\gamma^2+1)d.$$
This completes the proof of Lemma~\ref{lem:variance}.

\section{The last two rounds}\label{sec:end}
In the following lemma we show that if one starts the majority dynamics process from any configuration where the number of $-1$s is at most $\delta n$, for some $\delta$ small enough, then in two subsequent rounds unanimity will be achieved. 
\begin{lemma}\label{lem:lasttwo}
Let $d \geq \lambda n^{1/2}$ and $\delta < 1/10$.
Then with probability $1-o(1)$ the following holds for $G(n,p)$. For all partitions
 $P_0, N_0$ of  $V_n$, with $|P_0|\ge n(1- \delta)$, if all vertices in $P_0$ are in state +1, whereas 
all vertices of $N_0$ are in state $-1$, then after two rounds the majority dynamics process reaches unanimity. 
\end{lemma}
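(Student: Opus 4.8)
The plan is to prove Lemma~\ref{lem:lasttwo} via a union bound over all ``bad'' vertices, controlling the probability that any single vertex fails to flip to $+1$ using sharp concentration inequalities. Because we start from an arbitrary configuration with at most $\delta n < n/10$ vertices in state $-1$, the key quantitative input is that every vertex has roughly $d$ neighbours, of which all but a $\delta$-fraction (in expectation) are in state $+1$. So in a single round each vertex should see a large majority of $+1$s among its neighbours, and the danger is only for vertices whose neighbourhood atypically concentrates the $-1$s.

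First I would fix the configuration abstractly: rather than conditioning on a particular partition $P_0, N_0$, I would prove the statement simultaneously for all such partitions by bounding, for each vertex $v$, the probability (over the random graph alone) that $v$ has ``too many'' $-1$-neighbours for some admissible partition. Concretely, the first round sends a vertex $v$ to state $-1$ only if the number of its neighbours in $N_0$ is at least half its degree; since $|N_0|\le \delta n \le n/10$ and each edge is present independently with probability $p=d/n$, the number of $N_0$-neighbours of $v$ is dominated by $\Bin{\delta n, d/n}$ with mean at most $\delta d$, while the total degree concentrates near $d$. A Chernoff bound then shows the probability that $v$ sees a $-1$-majority after round one is at most $\exp(-\Theta(d))$. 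The plan is to take a union bound over the at most $2^{H(\delta)n}$ choices of $N_0$ and the $n$ choices of $v$; since $d\ge \lambda n^{1/2}$, the per-vertex failure probability $\exp(-\Theta(d)) = \exp(-\Theta(\lambda\sqrt n))$ beats the entropy factor $2^{H(\delta)n}$ only if we are careful --- which is precisely where the hypothesis $\delta<1/10$ must be exploited to make $H(\delta)$ small enough relative to the constant in the exponent. I would therefore bound the number of new $-1$-vertices after round one, call the set $N_1$, and show $|N_1|$ is with high probability \emph{much} smaller than $\delta n$, say $|N_1|\le n^{1/2}$ or even $o(\sqrt n)$.

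Next, for the second round I would repeat the argument but now with the far smaller defect set $N_1$: after the first round all but a vanishing fraction of vertices are in state $+1$, so I bound the probability that any vertex retains or adopts state $-1$ in the second round given that its $-1$-neighbours all lie in $N_1$. Because $|N_1|$ is now tiny (at most $n^{1/2}$, say), the expected number of bad neighbours of any vertex is at most $|N_1|\cdot d/n = O(d/\sqrt n) = O(\lambda)$, a constant, and a vertex flips to $-1$ only if it has at least $\approx d/2$ such neighbours --- an event of probability $\exp(-\Theta(d\log d))$ or better. The union bound over the $n$ vertices and the at most $\binom{n}{|N_1|}$ choices of $N_1$ now closes easily, yielding $N_2=\emptyset$ with probability $1-o(1)$, i.e.\ unanimity after two rounds.

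The main obstacle I anticipate is the tension in the first-round union bound between the entropy of choosing the defect set and the single-vertex failure probability. The naive union over all $N_0$ with $|N_0|\le\delta n$ contributes an entropy factor of order $\exp(H(\delta)n)$, whereas each vertex fails only with probability $\exp(-\Theta(d))=\exp(-\Theta(\lambda\sqrt n))$; for fixed $\delta$ the entropy term dominates, so a crude union bound does \emph{not} suffice. The resolution --- and the technical heart of the lemma --- is to avoid bounding the event ``$v$ becomes $-1$ for \emph{some} bad configuration'' directly, and instead to fix the actual realisation of the edges and argue about the induced defect sets $N_1\supseteq N_2$ along the dynamics, so that at each round the relevant defect set is controlled by the previous one rather than ranging over all admissible sets. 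In other words, I would expose the graph once and track $|N_1|$, then $|N_2|$, using that each is super-polynomially smaller than its predecessor; the condition $\delta<1/10$ guarantees that the initial majority margin $d-2\delta d$ is bounded below by a positive constant fraction of $d$, which is exactly what makes the first Chernoff bound strong enough to force $|N_1|$ down to a regime where the second round can be handled cleanly.
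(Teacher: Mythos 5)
You correctly isolate the crux --- the union bound over initial partitions carries entropy $e^{\Theta(n)}$, while a per-vertex Chernoff bound only gives failure probability $e^{-\Theta(d)}=e^{-\Theta(\lambda\sqrt n)}$, which loses for every fixed $\delta>0$ --- but your proposed resolution does not close this gap. ``Exposing the graph once and tracking the induced defect sets'' cannot work as stated: the lemma asserts a property holding simultaneously for \emph{all} initial partitions $(P_0,N_0)$ with $|N_0|\le \delta n$, so the first-round defect set $N_1=N_1(N_0)$ unavoidably ranges over a family indexed by all admissible $N_0$; once the edges are fixed you would need a deterministic, discrepancy-type property of the graph valid for every such $N_0$, and proving that this property holds with high probability is exactly the union bound you are trying to avoid. (Moreover, the containment $N_1\supseteq N_2$ you invoke is unjustified: majority dynamics is non-monotone, so defect sets need not be nested.) Your suggestion that $\delta<1/10$ makes $H(\delta)$ ``small enough'' is quantitatively hopeless: $e^{-c\lambda\sqrt n}$ per vertex never beats $e^{H(\delta)n}$ for fixed $\delta$, no matter how the constants are tuned.

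The missing idea --- the heart of the paper's proof --- is to amplify the per-vertex exponent $\Theta(d)$ into a per-\emph{set} exponent $\Theta(d^2)$. Instead of bounding the probability that a single vertex fails, one bounds the probability that an entire set $W$ of size $d/10$ lands in $N_1$. If every $v\in W$ has $d_{N_0}(v)\ge d_{P_0}(v)$, then after discarding the at most $|W|=d/10$ edges from $v$ into $W$ one gets $d_{N_0\setminus W}(v)\ge d_{P_0\setminus W}(v)-d/10$; crucially these modified events, for distinct $v\in W$, depend on disjoint sets of potential edges (those joining $v$ to $V_n\setminus W$) and are therefore \emph{independent}. Each has probability $e^{-\Omega(d)}$ (this is where $\delta<1/10$ enters, giving $|P_0\setminus W|\ge 8n/10$ against $|N_0|\le n/10$), so $\PPo{W\to N_1}\le e^{-\Omega(d|W|)}=e^{-\Omega(d^2)}=e^{-\Omega(\lambda^2 n)}$. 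This beats both the $\binom{n}{d/10}$ choices of $W$ and the $2^n$ choices of partition once $\lambda$ is large --- indeed this is the only place where $d\ge\lambda n^{1/2}$ is used --- and yields that with high probability $|N_1|<d/10$ simultaneously for all admissible partitions. The second round then needs no union bound at all: with high probability all degrees are at least $d/2$, so $|N_1|<d/10$ forces every vertex to see a strict $+1$ majority, giving unanimity. Your second-round plan (forcing $|N_1|\le\sqrt n$ and taking another union bound over possible sets $N_1$) is thus both unobtainable from your first-round argument and unnecessary.
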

\begin{proof}[Proof of Lemma~\ref{lem:lasttwo}]
Let $P_i$ and $N_i$ denote the set of vertices in 
$+1$ and $-1$, respectively, after $i$ rounds.
Consider a partition of $V_n$ into two sets $P_0, N_0$ such that 
$|P_0|\geq n(1-\delta)$. Suppose that the majority dynamics starts with all elements of $P_0$ in state $+1$ 
and all elements of $N_0$ in state $-1$. Note that until this point we have only fixed the states of the vertices 
in the graph, but we have not exposed any edges so far. 

We will show that with probability $1- o(1)$ we have $|N_1| < d/10$. In order to achieve this, we bound the probability that every vertex in a set of size $d/10$ has state $-1$ after the first step and apply a union bound. 

For a subset of vertices $W$ we denote by $\{W \to N_1\}$ the event that 
after the first round all vertices in $W$ will have state $-1$.

We start by providing an upper bound on $\PPo{W \to N_1}$ for each $W\subset \VertexSet$ with $|W|=d/10$. 
For a vertex $v \in \VertexSet$ we let $d_{S}(v)$ denote its degree inside a subset of vertices $S$. 
This random variable is binomially distributed with parameters $|S|$ and $d/n$. Note that if $\{W \to N_1\}$, then for every $v\in W$ we have $d_{P_0}(v) \leq  d_{N_0}(v)$. Thus, we have the following upper bound:
$$\PPo{W\to N_1} \leq \PPo{\forall v \in W \ :\ d_{N_0} (v) \geq d_{P_0} (v)}.
$$
As $d_{N_0} (v) \le d_{N_0\setminus W} (v) + d_{W} (v) \leq 
d_{N_0\setminus W} (v) + |W|$ and $|W|=d/10$, we have that if 
$d_{N_0} (v) \geq d_{P_0} (v)$, then $d_{N_0\setminus W} (v) \geq d_{P_0}(v) - d/10 
\ge d_{P_0\setminus W}(v) - d/10$. Therefore
$$\PPo{W\to N_1} \leq \PPo{\forall v \in W \ :\ d_{N_0\setminus W} (v) \geq d_{P_0\setminus W}(v) - d/10}.
$$

The latter event is the intersection of independent events. 
For each one of them, we have 
\begin{equation*}
\begin{split}
\PPo{d_{N_0\setminus W} (v) \geq d_{P_0\setminus W}(v) - d/10} &< \PPo{  d_{P_0\setminus W} (v) < d/2} + 
\PPo{d_{N_0\setminus W}(v) > d/2 - d/10} \\
&\leq \PPo{  d_{P_0} (v) < d/2} + \PPo{d_{N_0\setminus W}(v) > d/3}.
\end{split}
\end{equation*}

Recall that $d_{S}(v)\sim \Bin{|S|, d/n}$ and $\delta <1/10$.
Thus $|P_0\setminus W|\ge n-\delta n -d/10\ge 8n/10$, whereby $d\ge \Ex{d_{P_0\setminus W}(v)}\ge 8d/10$. By the Chernoff bound, the first probability is $e^{-\Omega (d)}$.
On the other hand $|N_0\setminus W|\le |N_0|\le \delta n  < n/10$, as $\delta < 1/10$. 
Hence $\Ex{d_{N_0\setminus W}(v)}\le d/10$  and the Chernoff bound again implies that 
$\PPo{d_{N_0\setminus W}(v) > d/3} = e^{-\Omega (d)}$.  
So 
$$ \PPo{d_{N_0\setminus W} (v) \geq d_{P_0\setminus W}(v) - d/10}  = e^{-\Omega (d)},$$
whereby there exists $\lambda_1>0$ such that for $n$ sufficiently large, we have 
$$ \PPo{W\to N_1} \leq e^{-\lambda_1 d |W|}=e^{-\lambda_1 d^2/10}.$$
For such $n$, the union bound implies that the probability that there exists a set $W$ of size $d/10$ such that $\{W\to N_1\}$ holds is at most
$$\binom{n}{d/10}e^{-\lambda_1 d^2/10}\le \exp\left(\frac{d}{10}\left(\log{n}-\lambda_1 d\right)\right) < 
\exp \left( - \frac{\lambda_1 d^2}{20} \right) \leq \exp \left( - \frac{\lambda_1 \lambda^2 n}{20} \right).$$ 
Summing over all partitions of $V_n$ whose number  can be crudely bounded by $2^n$, the union bound 
implies that if $\lambda$ is sufficiently large, then the probability that there exists a subset $W$ is size $d/10$ 
which becomes negative after one step is $o(1)$. Note that this is the only part of the proof which uses the condition on $d$.

For the subsequent round, note that with probability $1-o(1)$, all vertices of $G(n,p)$ have degrees at least $d/2$. 
So if $|N_1|<d/10$, it turns out that after the execution of the first step 
all vertices will have the majority of their neighbours having state +1. Thus, the next round 
leads to unanimity. 
\end{proof}

\section{Reaching unanimity: proof of Theorem~\ref{thm:main}}\label{sec:proof-mainthm}
Let us fix $0<\eps < 1$. 
By Lemma~\ref{lem:initialstate}, with $c= \sqrt{2\pi}\eps /20$, we have 
$$\PPo{|\sum\nolimits_{v \in V_n}S_0 (v)| \geq 2c \sqrt{n}} > 1 - \eps /4, $$
provided that $n$ is sufficiently large. 
Conditional on this event, with probability $1/2$ we have $\sum_{v \in V_n}S_0 (v) \geq 2c\sqrt{n}$.
Let us assume that this event is realised. For the complementary case the proof is analogous.  

Let $P_2 :=\{ v : S_2 (v) = +1 \}$, that is, $P_2$ is the set of vertices whose state is $+1$ after the first 
two rounds. Let $N_2$ be the complement of this set.  
Lemma~\ref{lem:2-steps-lemma} implies that 
$$\Ex{|N_2| \mid \sum\nolimits_{v \in V_n}S_0 (v) \geq 2c\sqrt{n}} < \eps n /20. $$
So, by Markov's inequality, we have
$$\PPo{|N_2| < n/10 \mid \sum\nolimits_{v \in V_n}S_0 (v) \geq 2c\sqrt{n} } > 1 - \eps/2.$$
Finally, by Lemma~\ref{lem:lasttwo},  if $n$ is sufficiently large, then with probability at least $1-\eps/4$ 
the random graph $G(n,p)$ is such that after two more rounds unanimity will be reached. Thus, the union 
bound implies that with probability at least $1-\eps$  unanimity is reached after four rounds
and concludes the proof of Theorem~\ref{thm:main}.

\section{Local limit theorem: proof of Theorem~\ref{cor:lll}} \label{sec:proof_of_LLT}

We will use the following results in order to prove Theorem~\ref{cor:lll}.
Let $\mathbb{R}^+$ denote the set of positive real numbers.
\begin{theorem}[Theorem 6 in Chapter I of~\cite{MR0388499}]\label{thm:inv}
Let the random variable $X$ have lattice distribution, with possible values of the form $a+kh$ for some $a\in \mathbb R, h\in \mathbb R^+$, and any $k\in \mathbb Z$. Then, 
$$\PPo{X=a+kh}=\frac{h}{2\pi}\int_{|t|<\pi/h}\exp\left(-it(a+kh)\right)f(t) dt,$$
where $f(t)$ is the characteristic function of $X$, i.e., $f(t)=\Ex{\exp(itX)}$.
\end{theorem}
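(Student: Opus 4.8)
The plan is to recognise the claimed identity as the classical Fourier inversion formula for a lattice-distributed random variable, which reduces to the orthogonality of complex exponentials on an interval of length $2\pi/h$. Since the statement is quoted verbatim from~\cite{MR0388499}, in the paper proper one simply invokes it; what follows is how I would recover it from first principles.

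First I would write the characteristic function explicitly as a series over the support of $X$. Setting $p_m:=\PPo{X=a+mh}$ for $m\in\mathbb{Z}$, by definition
$$f(t) = \Ex{\exp(itX)} = \sum_{m\in\mathbb{Z}} p_m \exp\left(it(a+mh)\right).$$
Substituting this into the right-hand side of the claimed formula and interchanging the summation with the finite-measure integral yields
$$\frac{h}{2\pi}\int_{|t|<\pi/h}\exp\left(-it(a+kh)\right)f(t)\,dt = \frac{h}{2\pi}\sum_{m\in\mathbb{Z}} p_m \int_{-\pi/h}^{\pi/h}\exp\left(ith(m-k)\right)\,dt.$$

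Next I would evaluate the inner integrals by orthogonality. For $m=k$ the integrand is identically $1$, so the integral equals $2\pi/h$; for $m\neq k$, writing $j=m-k$ for a nonzero integer, the integral equals $\frac{2\sin(\pi j)}{hj}=0$ because $\sin(\pi j)=0$. Hence only the term $m=k$ survives and the right-hand side collapses to $\frac{h}{2\pi}\cdot p_k \cdot \frac{2\pi}{h} = p_k = \PPo{X=a+kh}$, which is precisely the left-hand side, completing the argument.

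The only point requiring care — and the main, if modest, obstacle — is justifying the interchange of summation and integration. This is immediate from Fubini/Tonelli: every exponential has modulus $1$, the masses satisfy $\sum_{m}p_m=1<\infty$, and the domain $(-\pi/h,\pi/h)$ has finite Lebesgue measure, so $\sum_m p_m \int |\cdots|\,dt<\infty$ and the interchange is valid. The fact that the integral is over the open interval $|t|<\pi/h$ rather than the closed one is irrelevant, since the two endpoints form a null set. Everything else is a routine evaluation of elementary integrals.
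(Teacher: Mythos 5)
Your proof is correct. Note that the paper itself offers no proof of this statement: it is imported verbatim as Theorem 6 in Chapter I of~\cite{MR0388499}, so there is no "paper's approach" to compare against. Your argument --- expanding $f(t)$ as the series $\sum_m p_m \exp\left(it(a+mh)\right)$ over the lattice support, interchanging sum and integral (justified by Fubini, since $\sum_m p_m = 1$ and the interval has finite length), and then using orthogonality of the exponentials $\exp\left(ithj\right)$ over an interval of length $2\pi/h$ --- is exactly the classical inversion argument, and is the standard proof found in the cited reference. Your handling of the two points requiring care (the interchange of summation and integration, and the irrelevance of the open versus closed interval of integration) is sound, so your proposal is a complete and correct reconstruction of the result the paper uses as a black box.
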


In particular, by taking $a=0, h=1$ in Theorem~\ref{thm:inv} we have for every integer-valued random variable $X$ and $k\in \mathbb{Z}$ that
\begin{equation}\label{eq:inv}
\PPo{X=k}=\frac{1}{2\pi}\int_{|t|<\pi}\exp\left(-itk\right)f(t) dt,
\end{equation}
where $f(t)$ is the characteristic function of $X$.

We also require a version of the Berry-Esseen Theorem.
\begin{lemma}[see eg. Lemma 1 in Chapter V  of~\cite{MR0388499}]\label{lem:BET}
Let $X_1,\ldots, X_n$ be independent random variables with $\Ex{|X_j-\Ex{X_j}|^3}< \infty$ for $j=1,\ldots,n$. In addition, let $X=\sum_{j=1}^n X_j$ and
$$L=(\Var{X})^{-3/2}\sum_{i=1}^n \Ex{|X_j-\Ex{X_j}|^3}.$$
Denote by $\hat{X}$ the normalised version of $X$, i.e., $\hat{X}=(X-\Ex{X})/\sqrt{\Var{X}}$, and by $\hat{f}(t)$ the characteristic function of $\hat{X}$, i.e., $\hat{f}(t)=\Ex{\exp(it\hat{X})}$.
Then  we have
$$|\hat{f}(t)-\exp(-t^2/2)|\le 16 L |t|^3 \exp(-t^2/3),$$
when $t\le 1/(4L)$.
\end{lemma}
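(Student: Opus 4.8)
The plan is to exploit independence to factorise the characteristic function and then compare it, factor by factor, with the corresponding Gaussian factors, controlling each comparison by a third-moment (Lyapunov) remainder. First I would reduce to a normalised sum. Writing $Y_j := (X_j - \Ex{X_j})/\sqrt{\Var{X}}$, we have $\hat X = \sum_{j=1}^n Y_j$ with $\Ex{Y_j}=0$, and, setting $b_j^2 := \Ex{Y_j^2}$, the normalisation gives $\sum_{j=1}^n b_j^2 = 1$ and $\sum_{j=1}^n \Ex{|Y_j|^3} = L$. By independence the characteristic function factorises as $\hat f(t) = \prod_{j=1}^n f_j(t)$ with $f_j(t) = \Ex{\Expon{it Y_j}}$, while the Gaussian target factorises as $\Expon{-t^2/2} = \prod_{j=1}^n g_j(t)$ with $g_j(t) = \Expon{-b_j^2 t^2/2}$.

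Next I would establish two per-factor estimates. Since $\Ex{Y_j}=0$, the elementary inequality $|e^{ix}-1-ix+x^2/2|\le |x|^3/6$ applied with $x = tY_j$ and then averaged yields
$$\Big| f_j(t) - \Big(1 - \tfrac12 b_j^2 t^2\Big)\Big| \le \tfrac16 |t|^3 \Ex{|Y_j|^3}.$$
Combined with the triangle inequality and $1 - y \le e^{-y}$ this gives the magnitude bound $|f_j(t)| \le \Expon{-\tfrac12 b_j^2 t^2 + \tfrac16 |t|^3 \Ex{|Y_j|^3}}$; together with $\big|(1-\tfrac12 b_j^2t^2) - g_j(t)\big| \le \tfrac18 b_j^4 t^4$ (from $|e^{-u}-(1-u)|\le u^2/2$) it controls $|f_j(t)-g_j(t)|$.

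Then I would use the telescoping identity $\prod_j f_j - \prod_j g_j = \sum_{k}(f_k-g_k)\prod_{j<k}f_j\prod_{j>k}g_j$ to write $\hat f(t)-\Expon{-t^2/2}$ as a sum of $n$ terms. Here the range restriction $|t|\le 1/(4L)$ does the essential work: since $|t|^3 = t^2|t| \le t^2/(4L)$ and $\sum_j \Ex{|Y_j|^3}=L$, summing the cubic remainders over all factors contributes at most $t^2/24$ to the exponent, so that each mixed product satisfies $\prod_{j<k}|f_j|\prod_{j>k}|g_j| \le \Expon{-\tfrac12(1-b_k^2)t^2 + t^2/24}$. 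I would then bound each $|f_k-g_k|$ by a constant times $|t|^3\Ex{|Y_k|^3}$, sum over $k$ using $\sum_k \Ex{|Y_k|^3}=L$, and collect constants to reach $16 L|t|^3 \Expon{-t^2/3}$.

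The main obstacle is the bookkeeping around the leftover factor $\Expon{b_k^2 t^2/2}$ in the mixed product and the second-order error $b_k^4 t^4$, neither of which is individually dominated by $|t|^3\Ex{|Y_k|^3}$ when $b_k^2$ is close to $1$. The resolution — and the genuinely delicate point — is the coupling forced by the hypothesis $|t|\le 1/(4L)$: a factor with $b_k^2 > 1/4$ has $\Ex{|Y_k|^3}\ge b_k^3 > 1/8$, hence $L > 1/8$ and $t^2 < 4$, so $t$ is bounded and such (at most three) factors can be absorbed into the constant; for the remaining factors $b_k^2 \le 1/4$ makes $\Expon{-\tfrac12(1-b_k^2)t^2+t^2/24}\le \Expon{-t^2/3}$ directly. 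Pinning the numerical constant down to exactly $16$ is then a matter of careful but routine estimation.
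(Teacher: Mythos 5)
First, a point of comparison: the paper does not prove Lemma~\ref{lem:BET} at all --- it is quoted verbatim from Petrov's book (Lemma 1 in Chapter V of~\cite{MR0388499}) and used as a black box in the proof of Theorem~\ref{cor:lll_general}. So your attempt must be measured against the standard (Petrov/Feller) argument. Your skeleton --- normalise, factorise both $\hat f$ and the Gaussian, telescope, control each factor by third moments --- is the right one, but two of your per-factor claims break down precisely in the regime that makes this lemma non-trivial, namely $L$ small and $|t|$ near the endpoint $1/(4L)$.

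The first genuine error is the magnitude bound $|f_j(t)| \le \Expon{-\tfrac12 b_j^2 t^2 + \tfrac16 |t|^3 \Ex{|Y_j|^3}}$: it is false as stated, and the derivation is invalid. The triangle inequality gives $|f_j(t)| \le \bigl|1-\tfrac12 b_j^2 t^2\bigr| + \tfrac16|t|^3\Ex{|Y_j|^3}$; when $b_j^2t^2>2$ you cannot replace $\bigl|1-\tfrac12 b_j^2t^2\bigr|$ by $\Expon{-\tfrac12 b_j^2t^2}$, and in any case $e^{-a}+c\le e^{-a}e^{c}$ is equivalent to $1+ce^{a}\le e^{c}$, which fails for small $c>0$ whenever $a>0$. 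Concretely, for $Y_j=\pm b_j$ symmetric and $b_j|t|=2.5$ the left side is $|\cos(2.5)|\approx 0.80$ while your right side is $\Expon{-0.52}\approx 0.59$. The correct substitute comes from symmetrisation ($|f_j|^2$ is the characteristic function of $Y_j-Y_j'$), giving $|f_j(t)|\le\Expon{-\tfrac12 b_j^2t^2+\tfrac23|t|^3\Ex{|Y_j|^3}}$ for \emph{all} $t$; but with $\tfrac23$ in place of $\tfrac16$ your $t^2/24$ becomes $t^2/6$, the mixed-product exponent becomes $-\tfrac13 t^2+\tfrac12 b_k^2t^2$, and no threshold on $b_k^2$ recovers the required $e^{-t^2/3}$, since the exponent exceeds $-t^2/3$ for every $b_k>0$.

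The second, more fundamental gap is that you locate the delicate case at ``$b_k^2$ close to $1$'', whereas the real problem is $b_k|t|$ large, which happens with all $b_k^2\le 1/4$. Since $b_k\le L^{1/3}$ (Lyapunov) while $|t|$ may be as large as $1/(4L)$, the product $b_k|t|$ can be of order $L^{-2/3}$, unbounded as $L\to 0$. In that regime $\tfrac18 b_k^4t^4=\tfrac18 (b_k|t|)\,b_k^3|t|^3$ is \emph{not} $O(|t|^3\Ex{|Y_k|^3})$ and the leftover factor $\Expon{b_k^2t^2/2}$ is \emph{not} $O(1)$, so both of your asserted closing steps fail, and your case split on $b_k^2>1/4$ (which forces $L>1/8$, $|t|<2$, and is indeed harmless) never touches this case. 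The missing idea, which is how the classical proof closes, is to split the range of $t$ rather than the set of factors: on $|t|\le L^{-1/3}$ every $b_k|t|\le 1$, so all your per-factor estimates become valid (there $1+x\le e^x$ does give your magnitude bound, and $\tfrac18 b_k^4t^4\le\tfrac18\Ex{|Y_k|^3}|t|^3$), and the telescoping yields the claim with room to spare; on the remaining range $L^{-1/3}<|t|\le 1/(4L)$ (nonempty only when $L<1/8$) one has $L|t|^3>1$, so the target right-hand side exceeds $16e^{-t^2/3}$, while $e^{-t^2/2}\le e^{-t^2/3}$ and, by the symmetrisation bound together with $|t|^3L\le t^2/4$, also $|\hat f(t)|\le\Expon{-\tfrac12 t^2+\tfrac23|t|^3L}\le e^{-t^2/3}$; hence the difference is at most $2e^{-t^2/3}$ and the inequality holds trivially. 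Without this (or an equivalent) splitting of the $t$-range, your argument cannot be completed.
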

Theorem~\ref{cor:lll} is a direct application of the following general local limit theorem for 
sum of Bernoulli-distributed random variables.
\begin{theorem}\label{cor:lll_general}
There exists an absolute constant $\gamma$ such that for any $n$ and set of independent Bernoulli-distributed random variables
$X_1,\ldots,X_n$ such that $X=\sum_{j=1}^n X_j$ satisfies $\Var{X}>0$
we have
$$\sup_{k\in \{0,\ldots, n\} }\left|\sqrt{\Var{X}}\PPo{X=k}-\frac{1}{\sqrt{2\pi}}\exp\left(-\frac{(k-\Ex{X})^2}{2\Var{X}}\right)\right|\le \frac{\gamma}{\sqrt{\Var{X}}}.$$
\end{theorem}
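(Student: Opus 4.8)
The plan is to prove Theorem~\ref{cor:lll_general} by Fourier inversion, following the classical scheme for local limit theorems but keeping every constant uniform in the parameters $p_1,\ldots,p_n$ of the Bernoulli variables. Write $\sigma^2=\Var X$, $\mu=\Ex X$, and set $x_k=(k-\mu)/\sigma$. Applying the inversion formula \eqref{eq:inv} to the integer-valued variable $X$ and substituting $s=\sigma t$ turns $\sigma\PPo{X=k}$ into $\frac{1}{2\pi}\int_{|s|<\pi\sigma}e^{-isx_k}\hat f(s)\,ds$, where $\hat f$ is the characteristic function of the normalised variable $\hat X$ from Lemma~\ref{lem:BET} (using $\hat f(s)=e^{-is\mu/\sigma}f(s/\sigma)$). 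Since the standard Gaussian density equals $\frac{1}{\sqrt{2\pi}}e^{-x_k^2/2}=\frac{1}{2\pi}\int_{\mathbb R}e^{-isx_k}e^{-s^2/2}\,ds$, the quantity to be bounded (after taking the supremum over $k$, which drops out) satisfies
\begin{equation*}
\left|\sigma\PPo{X=k}-\frac{1}{\sqrt{2\pi}}\Expon{-x_k^2/2}\right|\le \frac{1}{2\pi}\int_{|s|<\pi\sigma}\left|\hat f(s)-e^{-s^2/2}\right|\,ds+\frac{1}{2\pi}\int_{|s|\ge\pi\sigma}e^{-s^2/2}\,ds.
\end{equation*}

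The first key ingredient is that the Lyapunov ratio $L$ from Lemma~\ref{lem:BET} obeys $L\le 1/\sigma$ for Bernoulli sums. Indeed, a direct computation gives $\Ex{|X_j-\Ex{X_j}|^3}=p_j(1-p_j)\bigl(p_j^2+(1-p_j)^2\bigr)\le p_j(1-p_j)=\Var{X_j}$, whence $\sum_j\Ex{|X_j-\Ex{X_j}|^3}\le\sum_j\Var{X_j}=\sigma^2$ and thus $L=\sigma^{-3}\sum_j\Ex{|X_j-\Ex{X_j}|^3}\le\sigma^{-1}$. On the low-frequency range $|s|\le 1/(4L)$, Lemma~\ref{lem:BET} then yields $|\hat f(s)-e^{-s^2/2}|\le 16L|s|^3e^{-s^2/3}$, and integrating over all of $\mathbb R$ bounds this part by $16L\int_{\mathbb R}|s|^3e^{-s^2/3}\,ds=O(L)=O(1/\sigma)$, exactly the target order.

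The main obstacle is the range $1/(4L)<|s|<\pi\sigma$, where the Gaussian approximation of Lemma~\ref{lem:BET} is useless and one must instead show directly that $|\hat f(s)|$ is genuinely small. Here I would exploit the explicit product form of the characteristic function of a Bernoulli sum: since $|1-p_j+p_je^{it}|^2=1-2p_j(1-p_j)(1-\cos t)$, the inequality $1-x\le e^{-x}$ gives $|f(t)|\le\Expon{-\sigma^2(1-\cos t)}$, hence $|\hat f(s)|\le\Expon{-\sigma^2(1-\cos(s/\sigma))}$. On $|s|<\pi\sigma$ the argument $s/\sigma$ lies in $(-\pi,\pi)$, so the elementary bound $1-\cos\theta\ge 2\theta^2/\pi^2$ (valid for $|\theta|\le\pi$) yields $|\hat f(s)|\le\Expon{-2s^2/\pi^2}$. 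Bounding $|\hat f(s)-e^{-s^2/2}|\le e^{-2s^2/\pi^2}+e^{-s^2/2}$ and integrating over $|s|\ge 1/(4L)\ge\sigma/4$ then produces a Gaussian-type tail. This is where the lattice structure (the period $2\pi$ of $f$ and the integer support of $X$) is essential, and securing this bound \emph{uniformly} over all choices of $p_j$ is the delicate point of the argument.

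It then remains to check that these tails, together with the leftover $\int_{|s|\ge\pi\sigma}e^{-s^2/2}\,ds$, are $O(1/\sigma)$ for every $\sigma>0$, not only for large $\sigma$. For $\sigma\le 1$ I would use $1/\sigma\ge 1$ together with the crude bound $\int_{\mathbb R}e^{-s^2/2}\,ds=\sqrt{2\pi}$, so each tail term is at most a constant times $1/\sigma$; for $\sigma>1$ the standard estimate $\int_x^\infty e^{-cs^2}\,ds\le (2cx)^{-1}e^{-cx^2}$ applied at $x\in\{\sigma/4,\pi\sigma\}$ makes each tail at most $C/\sigma$ (in fact super-polynomially smaller). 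Collecting the $O(1/\sigma)$ low-frequency contribution with these tails yields an absolute constant $\gamma$ with $\bigl|\sigma\PPo{X=k}-\frac{1}{\sqrt{2\pi}}e^{-x_k^2/2}\bigr|\le\gamma/\sigma$ uniformly in $k$, which is the claim. Theorem~\ref{cor:lll} then follows immediately by specialising to the identically distributed case $p_j=q(k)$, so that $X\sim\mathrm{Bin}(k,q(k))$.
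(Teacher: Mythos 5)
Your proposal is correct and follows essentially the same route as the paper's proof: Fourier inversion for lattice variables, the Berry--Esseen bound of Lemma~\ref{lem:BET} on the range $|s|\le 1/(4L)$ with the identical key estimate $L\le 1/\sigma$ from $\Ex{|X_j-\mu_j|^3}\le \Var{X_j}$, and a direct bound on $|f(t)|\le \Expon{-\sigma^2(1-\cos t)}$ for the remaining frequencies. The only (cosmetic) difference is that on the mid-range you invoke Jordan's inequality $1-\cos\theta\ge 2\theta^2/\pi^2$ to get Gaussian decay $\Expon{-2s^2/\pi^2}$, where the paper simply uses $\cos t - 1\le \cos(1/4)-1<0$ to get a bound of the form $\sigma\Expon{z\sigma^2}=O(1/\sigma)$; both yield the required absolute constant.
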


\begin{proof}

To ease notation, let  $\mu_j=\Ex{X_j}$,  $\sigma_j=\sqrt{\Var{X_j}}$ (for each  $j\in [n]$), $\mu=\Ex{X}$, and $\sigma=\sqrt{\Var{X}}$.
Throughout the proof we will often work with the normalised version of $X$, namely $\hat{X} = (X-\mu)/\sigma$. Note that $\PPo{X=k}=\PPo{\hat{X}=\hat{k}},$ where $\hat{k} =(k-\mu)/\sigma$. 
In addition, denote the characteristic functions of $X_j$ by $f_j(t)=\Ex{\exp(itX_j)}=\PPo{X_j=0}+\PPo{X_j=1}e^{it}$. Then the characteristic functions of $X$ and $\hat{X}$ satisfy
\begin{equation}\label{eq:charfunc}
f(t)=\prod_{j=1}^n f_j(t) \quad \mbox{and} \quad \hat{f}(t)=\exp\left(-\frac{it\mu}{\sigma}\right)f\left(\frac{t}{\sigma}\right). 
\end{equation}

By \eqref{eq:inv} we have for every $k\in \mathbb Z$ that,
\begin{equation}\label{eq:inversion}
\begin{split}
\sigma \cdot \PPo{X=k}&=\frac{\sigma}{2\pi}\int_{-\pi}^{\pi}\exp(-itk)f(t) dt\\
&=\frac{1}{2\pi}\int_{-\pi\sigma}^{\pi\sigma}\exp\left(-\frac{itk}{\sigma}\right)f\left(\frac{t}{\sigma}\right) dt\\
&\stackrel{\eqref{eq:charfunc}}{=}\frac{1}{2\pi}\int_{-\pi\sigma}^{\pi\sigma}\exp(-it\hat{k})\hat{f}(t) dt. 
\end{split}
\end{equation}
In addition, using 
$$\exp(-x^2/2)=\int_{-\infty}^{\infty}\exp(-ixt-t^2/2) dt,\quad \forall x\in \mathbb R,$$
we have
\begin{equation}\label{eq:complex}
\frac{1}{\sqrt{2\pi}}\exp\left(-\frac{(k-\mu)^2}{2\sigma^2}\right)=\frac{1}{\sqrt{2\pi}}\exp\left(-\hat{k}^2/2\right)=\int_{-\infty}^{\infty} \exp(-it\hat{k}-t^2/2)dt.
\end{equation}
By \eqref{eq:inversion} and \eqref{eq:complex}, we obtain that for every $k\in \mathbb Z$, 
\begin{equation}\label{eq:mainthm}
\begin{split}
&\left|\sigma \cdot \PPo{X=k}-\frac{1}{\sqrt{2\pi}}\exp\left(-\frac{(k-\mu)^2}{2\sigma^2}\right)\right|\\
&=\frac{1}{2\pi}\left|\int_{-\pi\sigma}^{\pi\sigma}\exp(-it\hat{k})\hat{f}(t) dt- \int_{-\infty}^{\infty} \exp(-it\hat{k}-t^2/2)dt\right|.
\end{split}
\end{equation}

To bound \eqref{eq:mainthm}, set 
$$L=\sigma^{-3}\sum_{i=1}^n \Ex{|X_j- \mu_j|^3}.$$
Since for each $j=1,\ldots,n$
\begin{equation*}
\begin{split}
\Ex{|X_j-\mu_j|^3}&=\PPo{X_j=0}\PPo{X_j=1}^3+\PPo{X_j=1}\PPo{X_j=0}^3\\
&=\PPo{X_j=0}\PPo{X_j=1}(\PPo{X_j=1}^2+\PPo{X_j=0}^2)\\
&=\sigma_j^2(\PPo{X_j=1}^2+\PPo{X_j=0}^2),
\end{split}
\end{equation*}
and because 
$$\frac{1}{4}\le\PPo{X_j=1}^2+\PPo{X_j=0}^2\le 1$$
we have
\begin{equation}\label{eq:thirdmoment}
\sigma_j^2/4\le \Ex{|X_j-\mu_j|^3}\le \sigma_j^2.
\end{equation}
In addition, $\sigma^2 =\sum_{i=1}^n \sigma_j^2$ (because $X_1,\ldots, X_n$ are independent),
thus 
\begin{equation}\label{eq:Lbound}
1/(4\sigma) \le L    \le 1/\sigma.
\end{equation}

Using $|\exp(-it\hat{k})|=1$ and $\eqref{eq:Lbound}$  we have 
\begin{equation}\label{eq:mainthm2}
\begin{split}
&\left|\int_{-\pi\sigma}^{\pi\sigma}\exp(-it\hat{k})\hat{f}(t) dt- \int_{-\infty}^{\infty} \exp(-it\hat{k}-t^2/2)dt\right|\\
&\le \int_{|t|\le 1/(4L)}\left|\hat{f}(t)- \exp(-t^2/2)\right|dt + \int_{1/(4L)\le |t| \le \pi \sigma}|\hat{f}(t)| dt + \int_{1/(4L)\le |t|} \exp(-t^2/2) dt.
\end{split}
\end{equation}

We will derive upper bounds for the three terms on the right-hand side  of \eqref {eq:mainthm2} one by one starting with the first term. 
Recall that $X=\sum_{j=1}^n X_j$ is the sum of Bernoulli random variables $X_j$ and by \eqref{eq:thirdmoment}  for every $j\in [n]$ we have 
$\Ex{|X_j- {\mu_j}|^3} 
 \le \sigma_j^2 \le  1/4$ (the last inequality is because $X_j$ is an indicator random variable).  In addition, $\hat{f}(t)$ is the characteristic function of the normalised random variable $\hat{X}$.
Therefore, by Lemma~\ref{lem:BET} we have 
\begin{equation}\label{eq:maineq1}
\int_{|t|\le 1/(4L)}\left|\hat{f}(t)- \exp(-t^2/2)\right|dt \le 16 L \int_{-\infty}^{+\infty}|t^3|\exp(-t^2/3) dt =144 L \stackrel{\eqref{eq:Lbound}}{\le} \frac{144}{\sigma}.
\end{equation}

Next we will consider the third term. Again using \eqref{eq:Lbound} we have
\begin{equation}\label{eq:maineq2}
\begin{split}
\int_{1/(4L)\le |t|} \exp(-t^2/2) dt
&\stackrel{\eqref{eq:Lbound}}{\le} \int_{\sigma/4\le |t|} \exp(-t^2/2) dt \\
&\le 2\int_{0}^{\infty} \exp(-(\sigma/4+t)^2/2) dt \\
&\le \exp\left(-\frac{\sigma^2}{32}\right)\int_{-\infty}^{\infty}  \exp(-t^2/2)  dt \le \frac{8}{\sigma}.
\end{split}
\end{equation}

Finally, we examine the second term. Using $\left|\exp\left(-\frac{it\mu}{\sigma}\right)\right| =1$  and by \eqref{eq:Lbound}, we obtain

\begin{equation*}
\begin{split}
\int_{1/(4L)\le |t| \le \pi \sigma}|\hat{f}(t)| dt
&=\int_{1/(4L)\le |t| \le \pi\sigma}\left|\exp\left(-\frac{it\mu}{\sigma}\right)f\left(\frac{t}{\sigma}\right)\right| dt\\
&=\int_{1/(4L)\le |t| \le \pi\sigma}\left| f\left(\frac{t}{\sigma}\right)\right| dt\\
&\stackrel{\eqref{eq:Lbound}}{\le} \sigma\int_{1/4\le |t| \le \pi}|f(t)| dt\\
&=\sigma \int_{1/4\le |t| \le \pi}\prod_{j=1}^{n}|f_j(t)| dt.
\end{split}
\end{equation*}

A quick calculation implies that for each $j=1,\ldots,n$
\begin{equation*}
\begin{split}
|f_j(t)|^2=\left|\Ex{e^{it X_j}}\right|^2&=|\PPo{X_j=0}+\PPo{X_j=1}e^{it}|^2\\
&=(\PPo{X_j=0}+\PPo{X_j=1}\cos(t))^2+(\PPo{X_j=1}\sin(t))^2\\
&=\PPo{X_j=0}^2+\PPo{X_j=1}^2+2\PPo{X_j=0}\PPo{X_j=1}\cos(t)\\
&=1+2\PPo{X_j=0}\PPo{X_j=1}(\cos{t}-1)\\
&=1+2 (\cos{t}-1)\sigma_j^2, 
\end{split}
\end{equation*}
where in the penultimate step we used $\PPo{X_j=0}^2+\PPo{X_j=1}^2=1-2\PPo{X_j=0}\PPo{X_j=1}$.

Together with $y^2\le e^{y^2-1}$ for every real $y$ and $\sigma^2 =\sum_{i=1}^n \sigma_j^2$ (because $X_1,\ldots, X_n$ are independent), this implies
$$\prod_{j=1}^{n}|f_j(t)|\le \exp\left(\frac{1}{2}(|f_j(t)|^2-1)\right) = \exp\left(\sum_{j=1}^n(\cos{t}-1)\sigma_j^2\right)=\exp\left((\cos{t}-1)\sigma^2\right).$$
Set $z=\cos(1/4)-1<0$ and note that for $1/4\le |t| \le \pi$ we have
$$\exp\left((\cos{t}-1)\sigma^2\right)\le \exp\left(z\sigma^2\right).$$
Since  $z<0$, we can bound the second term in \eqref{eq:mainthm2} from above by
\begin{equation}\label{eq:maineq3}
\int_{1/(4L)\le |t| \le \pi \sigma}|\hat{f}(t)| dt \le \sigma\int_{1/4\le |t| \le \pi}\prod_{j=1}^{n}|f_j(t)| \le 2\pi\sigma \exp\left(z\sigma^2\right)\le \frac{75}{\sigma}.
\end{equation}

Using the upper bounds~\eqref{eq:mainthm2}--\eqref{eq:maineq3} in \eqref{eq:mainthm} we obtain that for each $k\in \mathbb Z$, 
\begin{equation*}
\left|\sigma \cdot \PPo{X=k}-\frac{1}{\sqrt{2\pi}}\exp\left(-\frac{(k-\mu)^2}{2\sigma^2}\right)\right|\le  \frac{50}{\sigma},
\end{equation*}
completing the proof.
\end{proof}

\section{Discussion}\label{sec:disscussion}
In this paper we analyse the evolution of majority dynamics on $G(n,p)$ 
	with $p = d/n$ for   $d =d(n) \geq \lambda n^{1/2}$. Our main result is the proof of a conjecture of Benjamini 
et al.~\cite{ar:BenjChanmDonnelTamuz2016} 
in which majority dynamics on such a random graph will become unanimous in at most four steps with probability 
arbitrarily close to 1, provided that the initial state is selected uniformly at random and $\lambda$ and $n$ are sufficiently large.  

Of course, a natural question is how majority dynamics evolves on a random graph of {\em smaller} (average) degree $d$. Benjamini et al. made the following general conjecture.  
\begin{conjecture}[Benjamini, Chan, O'Donnel, Tamuz, Tan~\cite{ar:BenjChanmDonnelTamuz2016}] \label{conj:Benj2016}
With high probability over the choice of the random graph and the choice of the initial state the following holds.
\begin{enumerate} 
\item If $d \to \infty$, then for any $\eps>0$ and for any $n$ sufficiently large
$$\lim_{t \to \infty} \left|\sum\nolimits_{v \in V_n} S_{2t} (v)\right| \in [(1-\eps)n,n].$$
\item If $d$ is bounded, then for any $\eps>0$ and for any $n$ sufficiently large
$$\lim_{t \to \infty} \left| \sum\nolimits_{v \in V_n} S_{2t}(v) \right| \in [(1-\eps)n/2,(1+\eps)n/2].$$ 
\end{enumerate}
\end{conjecture}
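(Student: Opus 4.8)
The plan is to treat the two regimes of the conjecture separately, since they exhibit qualitatively different behaviour, and to build on the amplification mechanism already isolated in Section~\ref{sec:begin}. For part~(1), the case $d\to\infty$, Theorem~\ref{thm:main} already settles the range $d\ge\lambda n^{1/2}$ (there the limit equals $n$, hence lies in $[(1-\eps)n,n]$), so the task reduces to the uncovered window $\omega(1)\le d\le\lambda n^{1/2}$. Here I would iterate the single-round gain rather than stopping after two rounds. The heart of Section~\ref{sec:begin} shows that, conditioned on a configuration with magnetisation $\mu_t$, a fixed vertex flips to $\sgn(\mu_t)$ with probability roughly $\tfrac12+\Thta{\sqrt{d}\,|\mu_t|}$ as long as $\sqrt{d}\,|\mu_t|\ll 1$, and a second-moment argument transfers this into a statement about $\mu_{t+1}$.

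Concretely, I would formalise a one-round amplification lemma asserting that $\mu_{t+1}^2\ge c_1\,d\,\mu_t^2$ with high conditional probability whenever $d\mu_t^2\le 1$, for some absolute constant $c_1>0$; since $d\to\infty$ this forces geometric growth. Starting from $\Ex{\mu_0^2}=1/n$, the recursion drives $d\mu_t^2$ above a constant after $T=\Ol{\log n/\log d}$ rounds, at which point a bootstrap step in the spirit of Lemma~\ref{lem:lasttwo} pushes the process to near-unanimity, yielding a limit in $[(1-\eps)n,n]$. The main obstacle in part~(1) is that the heuristic's independence assumption is genuinely false: whereas the paper pays for correlations only across two rounds (via the covariance identity of Claim~\ref{clm:cov}), iterating over $T$ rounds must control the accumulation of dependencies created by re-using the same edge set. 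I would expose edges in batches and argue that, at round $t$, one conditions on the history $\sigma(S_0,\dots,S_t)$ and shows that from the perspective of a typical vertex its neighbours' states are still close to an independent $\{-1,+1\}$ family with mean $\mu_t$; a local limit theorem of the type of Theorem~\ref{cor:lll}, applied on this conditional space, then reproduces the gain estimate. Keeping the error terms summable over all $T$ rounds, uniformly over the shrinking set of atypical vertices, is where the real work lies.

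Part~(2), bounded $d$, requires an entirely different mechanism, because no amplification to consensus occurs and the process instead locks into a $2$-periodic configuration (Goles--Olivos). The plan is to pass to the local weak limit of $\Gnp$ with $p=d/n$, namely the Galton--Watson tree with $\mathrm{Po}(d)$ offspring, and to analyse majority dynamics there via a recursive distributional equation: the stationary law of a vertex's even-round state should be a fixed point of a belief-propagation-type map on the tree. One would then argue that for bounded $d$ the vanishing initial bias $\mu_0=\Ol{n^{-1/2}}$ is not amplified, so the limiting density of $+1$'s concentrates at the value predicted by the symmetric fixed point, yielding $\lim_{t\to\infty}\big|\sum_{v}S_{2t}(v)\big|\in[(1-\eps)n/2,(1+\eps)n/2]$.

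The hard part — and the most serious obstacle of the whole programme — is that majority dynamics is non-monotone and long-range-correlated, so transferring a tree computation back to $\Gnp$ is far more delicate than for the monotone bootstrap processes to which it superficially resembles. In particular, identifying the correct fixed point, proving its stability under the dynamics, and establishing a concentration statement that the finite-graph magnetisation tracks the tree prediction all at once is where I expect the approach either to succeed or to break; this is why part~(2), rather than the iterated amplification of part~(1), is the genuinely open heart of the conjecture.
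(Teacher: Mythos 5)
The statement you were asked about is not a theorem of the paper at all: it is an open conjecture of Benjamini et al., recorded in the paper's Discussion section, and the paper proves only the sub-case $d\ge\lambda n^{1/2}$ of part~(1) (that is Theorem~\ref{thm:main}); the authors explicitly say they verify the conjecture ``provided that $d\to\infty$ fast enough.'' So there is no proof in the paper to compare against, and what you have written is, by your own framing, a research programme rather than a proof --- neither of its two key steps is established. For part~(1), your one-round amplification lemma $\mu_{t+1}^2\ge c_1 d\mu_t^2$ is precisely the Benjamini et al.\ heuristic restated as a lemma, and the obstruction you name is in fact fatal to the method you sketch: majority dynamics reuses \emph{every} edge in \emph{every} round, so after the first round the edge randomness is exhausted --- there are no fresh edges to ``expose in batches'' at round $t\ge 2$, and conditionally on $\sigma(S_0,\dots,S_t)$ the neighbour states are deterministic functions of information already revealed, not an approximately independent family to which Theorem~\ref{cor:lll} could be applied. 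The paper's proof of Theorem~\ref{thm:main} sidesteps exactly this: $d$ is taken so large that two rounds of gain suffice, and the single layer of dependence is paid for once via the covariance identity of Claim~\ref{clm:cov}; that device does not iterate, and controlling the accumulated dependence over $T=\Theta(\log n/\log d)$ rounds is the genuinely open problem, not a technicality to be kept ``summable.''

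Part~(2) has a structural flaw beyond incompleteness. Local weak convergence of $\Gnp$, $p=d/n$, to the $\mathrm{Po}(d)$ Galton--Watson tree controls the law of $S_t(v)$ for each \emph{fixed} $t$ as $n\to\infty$, but the conjecture concerns $\lim_{t\to\infty}$ at fixed $n$, and these limits are not known to commute; the eventual $2$-periodic configuration guaranteed by Goles--Olivos depends on the whole graph, not on any bounded-radius neighbourhood. Worse, on the infinite tree itself even eventual periodicity is unavailable: as the paper notes, Benjamini et al.\ proved it only for unimodular transitive graphs and merely \emph{conjectured} it for bounded-degree infinite graphs, so the ``stationary law'' whose fixed-point equation you propose to solve is not known to exist. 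Your self-assessment is accurate --- identifying the fixed point, proving stability, and transferring back to the finite graph are where the approach would break --- but that means the proposal should be read as a plausible outline of attack on an open conjecture, with the honestly flagged obstacles being exactly the open content, and not as a proof with a reparable gap.
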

In other words, when $d\to \infty$ (as $n\to \infty$), majority dynamics will be cyclic with period at most two, 
fluctuating between two states at which there is almost-unanimity. 
In this paper we verify this in a strong sense provided that $d\to \infty$ fast enough (Theorem~\ref{thm:main}). 
However, when $d$ is bounded, they conjectured that the process eventually reaches a cycle 
fluctuating between two states in which the vertices are approximately evenly split between the two states.

Strengthening the above, one can ask for the minimal difference required between the number of vertices with initial state $+1$ and $-1$ in order to eventually reach unanimity (with high probability). Our proof implies that for dense binomial random graphs this value is $O(\sqrt{n})$, however the exact threshold remains unknown.

Furthermore, a more detailed analysis is needed for $d$-regular $\lambda$-expanders where the initial state 
has bias of order $n^{-1/2}$ rather than $d^{-1/2}$, which is considered in~\cite{ar:MosselNeemanTamuz}. 
Benjamini et al.~\cite{ar:BenjChanmDonnelTamuz2016} proved that for random 4-regular graphs unanimity 
cannot be reached even if the bias is $\Omega (1)$. However, it is not clear whether for large $d$ (either 
fixed or moderately growing function of $n$) unanimity is reached even when the initial bias is of 
order $n^{-1/2}$.

\subsection*{Acknowledgement} We would like to thank anonymous referees for their helpful comments. 

\bibliographystyle{plain}
\bibliography{unanimity}
\end{document}